\numberwithin{equation}{section}
\newtheorem{Theorem}{Theorem}[section]
\newtheorem{Corollary}[Theorem]{Corollary}
\newtheorem{Proposition}[Theorem]{Proposition}
{ \theoremstyle{definition}

\newtheorem{Remark}[Theorem]{Remark} }
\def\C{\mathcal C}
\def\D{\mathcal D}
\def\E{\mathcal E}
\def\K{\mathcal K}
\def\I{\mathcal I}
\def\J{\mathcal J}
\def\L{\mathcal L}
\def\O{\mathcal O}
\def\X{\mathcal X}
\def\1{\mathbf 1}
\def\M{{\overline{\mathcal M}}}
\def\QQ{\mathbb Q}
\def\ZZ{\mathbb Z}
\def\CC{\mathbb C}
\def\Res{\operatorname{Res}}
\def\det{\operatorname{det}}
\def\tilde{\widetilde}
\def\p{\partial}
\def\a{\alpha}
\def\b{\beta}
\def\f{{\mathbf f}}
\def\t{{\mathbf t}}
\def\q{{\mathbf q}}
\def\lan{\langle}
\def\ran{\rangle}
\def\str{\operatorname{str}}
\def\ev{\operatorname{ev}}
\def\ft{\operatorname{ft}}
\def\Eu{\operatorname{Eu}}
\def\str{\operatorname{str}}
\def\Lie{\operatorname{Lie}}
\def\square{\Box}
\renewcommand{\Delta}{\triangle}
\begin{document}

\newcommand{\arXivNumber}{2008.08182}

\renewcommand{\thefootnote}{}

\renewcommand{\PaperNumber}{018}

\FirstPageHeading

\ShortArticleName{Quantum K-Theory of Grassmannians and Non-Abelian Localization}

\ArticleName{Quantum K-Theory of Grassmannians \\ and Non-Abelian Localization\footnote{This paper is a~contribution to the Special Issue on Representation Theory and Integrable Systems in honor of Vitaly Tarasov on the 60th birthday and Alexander Varchenko on the 70th birthday. The full collection is available at \href{https://www.emis.de/journals/SIGMA/Tarasov-Varchenko.html}{https://www.emis.de/journals/SIGMA/Tarasov-Varchenko.html}}}

\Author{Alexander GIVENTAL and Xiaohan YAN}

\AuthorNameForHeading{A.~Givental and X.~Yan}

\Address{Department of Mathematics, University of California at Berkeley, Berkeley, CA 94720, USA}
\Email{\href{mailto:givental@math.berkeley.edu}{givental@math.berkeley.edu}, \href{mailto:xiaohan_yan@berkeley.edu}{xiaohan\_yan@berkeley.edu}}

\ArticleDates{Received August 25, 2020, in final form February 02, 2021; Published online February 26, 2021}

\Abstract{In the example of complex grassmannians, we demonstrate various techniques available for computing genus-0 K-theoretic GW-invariants of flag manifolds and more gene\-ral quiver varieties. In particular, we address explicit reconstruction of all such invariants using finite-difference operators, the role of the $q$-hypergeometric series arising in the context of quasimap compactifications of spaces of rational curves in such varieties, the theory of twisted GW-invariants including level structures, as well as the Jackson-type integrals playing the role of equivariant K-theoretic mirrors.}

\Keywords{Gromov--Witten invariants; K-theory; grassmannians; non-abelian localization}

\Classification{14N35}

\begin{flushright}
\begin{minipage}{75mm}
\em To Vitaly Tarasov and Alexander Varchenko, on their anniversaries
\end{minipage}
\end{flushright}

\renewcommand{\thefootnote}{\arabic{footnote}}
\setcounter{footnote}{0}

\section{Introduction}

Just as quantum cohomology theory deals with intersection numbers between interesting cyc\-les in moduli spaces of stable maps of holomorphic curves in a given target (say, a K\"ahler manifold),
quantum K-theory studies sheaf cohomology (e.g., in the form of holomorphic Euler characteristics) of interesting vector bundles over these moduli spaces. The beginnings of the subject can be traced back to the 20-year-old note~\cite{Givental:WDVV} by the first-named author, the foundational work by Y.-P.~Lee~\cite{Lee}, and their joint paper on complete flag manifolds, $q$-Toda lattices and quantum groups~\cite{Givental-Lee}. In recent years, however, the interest to quantum K-theory expanded due to more discoveries of its diverse relations with representation theory, integrable systems, and $q$-hypergeometric functions.

Apparently the interest was initiated by the 2012 preprint~\cite{MO} by D.~Maulik and A.~Okoun\-kov, who connected equivariant quantum cohomology of quiver varieties with R-matrices. In~2014, this led R.~Rim{\'a}nyi, V.~Tarasov and A.~Varchenko~\cite{RTV} to a conjectural description of the quantum K-ring of the {\em cotangent bundle} of a partial flag variety. In even more recent literature motivated by representation theory (see, e.g.,~\cite{KPSZ, Ok, PSZ}), certain $q$-hypergeometric series, interesting from the point of view of the theory of integrable systems, appeared as generating functions for K-theoretic Gromov--Witten (GW) invariants of {\em symplectic} quiver varieties. In this literature, K-theoretic computations are based, however, on the {\em quasimap} (rather than stable map) compactifications~\cite{CKM} of spaces of rational curves in the GIT quotients of linear spaces. Based on the experience with mirror symmetry and quantum K-theory of toric varieties~\cite{Givental:perm5} one anticipates the $q$-hypergeometric generating functions arising from quasimap spaces to never\-theless represent the ``genuine'' (i.e., based on stable map compactifications) K-theoretic GW invariants, yet such invariants of a different kind, or more complicated ones than naively expec\-ted. In any case, this brings up the question of comparison (first attempted by H.~Liu~\cite{Liu}) of~the two approaches.

In this paper, we examine in substantial detail the genus-0 quantum K-theory of grass\-mannians ${\rm Gr}_{n,N}(\CC)$. The gras\-sman\-ni\-ans can be described as the GIT quotients $\mathop{\rm Hom} \big(\CC^n,\CC^N\big)//$ ${\rm GL}_n(\CC)$, and are perhaps the simplest among homogeneous spaces or, more generally, quiver varieties outside the toric class. Most of our methods carry over to other quiver varieties (and all~-- to any partial flag manifolds), but we prefer to illustrate the available techniques by way of simplest representative examples, trading generality for simplicity of notation.

In Sections~\ref{Sec2} and \ref{Sec3} we show how the technique of fixed point localization in moduli spaces of~stable maps can be used in order to compute the so-called ``small J-function'' of the grassmannian~-- the generating function for simplest genus-0 K-theoretic GW-invariants of it.

In Section \ref{Sec4} we combine the same technique with the idea known as ``non-abelian localization''~\cite{BCK2} in order to prove the invariance of the genus-0 quantum K-theory of the grassmannian under a suitable infinite dimensional group of pseudo-finite-difference operators. A key point here (inspired by the appendix in the paper~\cite{Hori-Vafa} by K.~Hori and C.~Vafa) is to begin with the toric quotient $\mathop{\rm Hom}\big(\CC^n,\CC^N\big)//T^n=\big(\CC P^{N-1}\big)^n$ by the maximal torus $T^n\subset {\rm GL}_n(\CC)$, and use Weyl-group invariant finite-difference operators on~$n$ Novikov's variables of the toric manifold.
Just as in the case of toric manifolds~\cite{Givental:perm8}, this infinite dimensional group of symmetries is large enough in order to reconstruct ``all'' genus-0 invariants of the grassmannian from the small J-function.

In Section \ref{Sec5}, we address the aforementioned comparison problem by interpreting (in several somewhat different ways) the $q$-hypergeometric series arising from quasimap theory of the cotangent bundle spaces $T^*{\rm Gr}_{n,N}$ as certain ``genuine'' K-theoretic GW-invariants, and in particular show that, contrary to a naive belief articulated in the literature, these series fail to represent ``small'' J-functions (of anything).

In Section \ref{Sec6}, we apply the invariance result from Section \ref{Sec5} to illustrate the ``non-abelian quantum Lefschetz'' principle which characterizes genus-0 quantum K-theory of a complete intersection in (or a vector bundle space over) the grassmannian.

In Section \ref{Sec7}, we show how our techniques can be used to extend (to the case of grassmannians) the toric results obtained by Y.~Ruan and M.~Zhang~\cite{R-Z} about the {\em level structures} in quantum \mbox{K-theory}. As a by-product, we clarify (hopefully) the phenomenon of {\em level correspondence} between ``dual'' grassmannians ${\rm Gr}_{n,N}={\rm Gr}_{N-n,N}$ discovered recently by H.~Dong and Y.~Wen~\cite{Dong-Wen}.

In Section \ref{Sec8}, we exhibit a Jackson-type integral formula for the small J-function in the quantum K-theory of the grassmannian, inspired by the ``non-abelian localization'' framework from Section~\ref{Sec4}. Our logic is the same as in the aforementioned appendix~\cite{Hori-Vafa} by K.~Hori and C.~Vafa, where cohomological mirrors of ${\rm Gr}_{n,N}$ were proposed. However, our mirror formula looks different (and possibly new, see~\cite{M-R}) even in the cohomological GW-theory.

Namely, this cohomological mirror of the grassmannian has the form of complex oscillating integral
\[
\I:=\int_{\Gamma\subset \X_{Q}} {\rm e}^{\left(\sum_{ij} x_{ij}-\sum_{i\neq i'}y_{ii'}\right)/z} \frac{\bigwedge_{ij} {\rm d}\ln x_{ij} \bigwedge_{i\neq i'}{\rm d}y_{ii'}}{\bigwedge_i {\rm d}\ln \big(\prod_jx_{ij}/\prod_{i'}(y_{ii'}/y_{ii'})\big)}.
\]
Here $\X_{Q}$ is the complex torus in the linear space with coordinates $\{ x_{ij}\}$, $i=1,\dots,n$, $j=1,\dots, N$ and $\{ y_{ii'}\}$, $i,i'=1,\dots,n$, $i\neq i'$, given by $n$ equations
\[
\prod_j x_{ij}=Q \prod_{i'}(y_{ii'}/y_{i'i}), \qquad i=1,\dots n,
\]
and $\Gamma$ is a combination of Lefschetz thimbles in $\X_{Q}$, invariant under the Weyl group $S_n$ acting on the coordinates $\{ x_{ij} \}$, $\{y_{ii'} \}$ by simultaneous permutations of the indices $i$ and $i'$.

As a mirror symmetry test, let us examine the critical set of the phase function (``superpotential'') using Lagrange multipliers $p_1,\dots, p_n$:
\[
\sum_{i,j}x_{ij}-\sum_{i\neq i'}y_{ii'}-\sum_i p_i\bigg(\sum_j\ln x_{ij} -\sum_{i'\neq i} (\ln y_{ii'}-\ln y_{i'i})-\ln Q\bigg).
\]
The critical points are determined from
\[
x_{ij}=p_i,\qquad y_{ii'}=p_i-p_{i'},\qquad p_i^N+(-1)^nQ=0,
\]
where the third set of equations comes from the constraints. The algebra of functions on the critical set (which is a finite lattice $\ZZ_N^n\subset T^n$) invariant under permutations of $(p_1,\dots,p_n)$
is indeed isomorphic to the ``small'' quantum cohomology algebra of the grassmannian (as described by formula (3.39) in~\cite{Witten}).

\section[The small J-function of Gr {n,N}]{The small J-function of $\mathbf{Gr}_{\boldsymbol{n,N}}$}
\label{Sec2}

{\sloppy
Let $X:={\rm Gr}_{n,N}$ be the grassmannian of $n$-dimensional subspaces $V\subset \CC^N$. Its K-ring $K^0(X)$ is~generated by the exterior powers $\bigwedge^kV$ of the tautological bundle, $k=1,\dots,n$. Using the \mbox{splitting} principle, we will often write them as elementary symmetric functions $\sum_{1\leq i_1<\cdots<i_k\leq n}P_{i_1}\cdots P_{i_k}$ of K-theoretic Chern roots of $V=P_1+\cdots+P_n$.

}

\begin{Proposition} \label{prop1}
The K-theoretic Poincar{\'e} pairing on $K^0(X)$ is given by residue formula
\[
\chi(X; \Phi(P))= (-1)^n\Res_{P=1} \frac{\Phi(P) \ \prod_{i\neq j}(1-P_i/P_j)}{(1-P_1)^N\cdots(1-P_n)^N} \frac{{\rm d}P_1\wedge\cdots \wedge {\rm d}P_n}{P_1\cdots P_n},
\]
where $\Phi$ is any symmetric Laurent polynomial of $P_1,\dots,P_n$.
\end{Proposition}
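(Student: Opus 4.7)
The natural approach is equivariant K-theoretic fixed-point localization for the standard torus $T^N=(\CC^*)^N$ acting on $\CC^N$, hence on $X=\mathrm{Gr}_{n,N}$. The $T^N$-fixed points $p_I$ are the coordinate $n$-planes indexed by $n$-subsets $I\subset\{1,\dots,N\}$. At $p_I$ the tautological bundle restricts to $V|_{p_I}=\bigoplus_{i\in I}\CC_{\Lambda_i}$ (equivariant Chern roots $\{\Lambda_i:i\in I\}$), and $T_{p_I}X$ has weights $\Lambda_j/\Lambda_i$ with $i\in I$, $j\notin I$. The Atiyah--Bott--Lefschetz formula therefore yields
\[
\chi^{T^N}\bigl(X;\Phi(V)\bigr)=\sum_{|I|=n}\frac{\Phi(\Lambda_I)}{\prod_{i\in I,\,j\notin I}(1-\Lambda_i/\Lambda_j)}.
\]
Since $\Phi(V)$ carries no equivariant structure, the left-hand side is constant in $\Lambda$ and coincides with $\chi(X;\Phi)$.

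The second step is to recognize this fixed-point sum as an iterated residue in the variables $P_1,\dots,P_n$. For generic $\Lambda$, I would introduce
\[
\omega_{\Lambda}=\frac{\Phi(P)\,\prod_{k\neq l}(1-P_k/P_l)}{\prod_{k,j}(1-P_k/\Lambda_j)}\,\frac{dP_1\wedge\cdots\wedge dP_n}{P_1\cdots P_n}.
\]
The iterated residue of $\omega_{\Lambda}$ at a tuple $(\Lambda_{j_1},\dots,\Lambda_{j_n})$ vanishes unless the $j_k$ are pairwise distinct, since the numerator $\prod_{k\neq l}(1-P_k/P_l)$ kills every diagonal. For a distinct tuple with underlying set $I$, the simple-pole residues (each $\Res_{P_k=\Lambda_{j_k}}dP_k/[P_k(1-P_k/\Lambda_{j_k})]=-1$) together with the cancellation of the internal factors $\prod_{i\neq i'\in I}(1-\Lambda_i/\Lambda_{i'})$ reproduce, up to a sign $(-1)^n$ and a multiplicity $n!$ coming from orderings of $I$, the Lefschetz summand indexed by $I$. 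This identifies the fixed-point sum with a suitable multiple of the total residue of $\omega_{\Lambda}$.

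Finally, one passes to the confluent limit $\Lambda_j\to 1$. The $nN$ polar loci $P_k=\Lambda_j$ coalesce at $(1,\dots,1)$, the form $\omega_{\Lambda}$ specializes to exactly the form displayed in the proposition (denominator $\prod_k(1-P_k)^N$), and the cycle enclosing the polar loci deforms to a small polytorus about $(1,\dots,1)$ whose integral, up to $(2\pi i)^n$, is the quantity denoted $\Res_{P=1}$. Continuity of the contour integral in $\Lambda$, combined with the $\Lambda$-independence of $\chi(X;\Phi)$, then delivers the stated residue formula, with the $(-1)^n$ prefactor arising from the sign tracked in the previous step.

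\textbf{Main obstacle.} The delicate point is the confluent limit: individual Lefschetz summands diverge as the $\Lambda_j$ merge, while the integral of $\omega_{\Lambda}$ over the enclosing polytorus stays regular, so one must justify exchanging limit and integration and verify that the resulting object is indeed the multi-dimensional residue at $P=(1,\dots,1)$. A secondary bookkeeping task is to match the sign $(-1)^n$ and the factor $n!$ from orderings-versus-subsets with the normalization convention encoded in $\Res_{P=1}$ on the Weyl-symmetric locus.
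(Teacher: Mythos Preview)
Your approach is precisely the paper's: the formula is obtained as the non-equivariant limit $\Lambda\to 1$ of the $T^N$-equivariant residue formula (stated immediately afterward as Proposition~\ref{prop2}), which in turn is proved by direct application of the Lefschetz holomorphic fixed-point formula. Your proposal in fact supplies considerably more detail than the paper's one-line sketch, including the confluent-limit and $n!$-bookkeeping issues you correctly flag (note that Proposition~\ref{prop2} carries the factor $(-1)^n/n!$ while Proposition~\ref{prop1} has only $(-1)^n$, so this normalization point does warrant care).
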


The formula is obtained as the non-equivariant limit $\Lambda\to 1$ from its $T^N$-equivariant counterpart, where $T^N$ is the torus of diagonal matrices $\operatorname{diag}(\Lambda_1,\dots,\Lambda_N)$ acting on $\C^N$.

\begin{Proposition}\label{prop2}
The $T^N$-equivariant K-theoretic Poincar{\'e} pairing on $K^0_T(X)$ is given by
\[
\chi_T(X;\Phi(P,\Lambda))= \frac{(-1)^n}{n!} \Res_{P\neq 0,\infty} \frac{\Phi(P,\Lambda)\ \prod_{i\neq j}(1-P_i/P_j)}{\prod_{i=1}^n\prod_{j=1}^N(1-P_i/\Lambda_j)} \frac{{\rm d}P_1\wedge\cdots \wedge {\rm d}P_n}{P_1\cdots P_n}.
\]
\end{Proposition}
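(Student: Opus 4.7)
The plan is to derive the formula by applying the $T^N$-equivariant K-theoretic Lefschetz fixed-point theorem and matching the resulting sum with the iterated residue.

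\emph{Step 1 (fixed-point localization).} The $T^N$-fixed points of $X$ are the coordinate subspaces $p_I := \operatorname{span}(e_i)_{i\in I}$ indexed by $n$-subsets $I\subset\{1,\dots,N\}$. At $p_I$, the tautological bundle $V$ restricts with K-theoretic Chern roots $\{\Lambda_i\}_{i\in I}$, so $\Phi(P,\Lambda)|_{p_I}=\Phi(\Lambda_I,\Lambda)$. The tangent space $T_{p_I}X\cong\operatorname{Hom}(V,\CC^N/V)|_{p_I}$ has $T^N$-weights $\Lambda_j/\Lambda_i$ for $i\in I,\,j\notin I$, giving K-theoretic Euler class $\prod_{i\in I,\,j\notin I}(1-\Lambda_i/\Lambda_j)$. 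The Atiyah--Bott K-theoretic fixed-point formula then yields
\[
\chi_T(X;\Phi)=\sum_{I}\frac{\Phi(\Lambda_I,\Lambda)}{\prod_{i\in I,\,j\notin I}(1-\Lambda_i/\Lambda_j)}.
\]

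\emph{Step 2 (iterated residue).} The only finite nonzero poles of the integrand lie at $P_i=\Lambda_j$; no $P_i=P_{i'}$ pole appears, since the Vandermonde $\prod_{i\neq i'}(1-P_i/P_{i'})$ sits in the numerator. Using $\Res_{P=\Lambda}\,{\rm d}P/(P(1-P/\Lambda))=-1$, the iterated residue at a tuple $\mathbf{j}=(j_1,\dots,j_n)$ produces an overall $(-1)^n$. The remaining Vandermonde factor evaluates to $\prod_{i\neq i'}(1-\Lambda_{j_i}/\Lambda_{j_{i'}})$, which vanishes whenever two $j_i$ coincide; hence only injective tuples contribute, and for a fixed $n$-subset $I$ the $n!$ orderings give identical summands by the symmetry of $\Phi$ and of the Vandermonde, precisely cancelling the $1/n!$ prefactor.

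\emph{Step 3 (cancellation).} After the residues, the denominator reads $\prod_{i}\prod_{j\neq j_i}(1-\Lambda_{j_i}/\Lambda_j)$, which for $\{j_i\}=I$ factors as
\[
\prod_{i\neq i'\in I}(1-\Lambda_i/\Lambda_{i'})\cdot\prod_{i\in I,\,j\notin I}(1-\Lambda_i/\Lambda_j).
\]
The first factor cancels the restricted Vandermonde numerator exactly, while the second is precisely the Lefschetz Euler-class denominator. Combining this with the overall $(-1)^n$ from Step 2 and the $(-1)^n/n!$ prefactor reproduces the claimed identity.

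The main obstacle I anticipate is pure bookkeeping: tracking the signs from the normalization $1-P/\Lambda$ (rather than $P-\Lambda$), confirming that the Vandermonde numerator suppresses \emph{every} non-injective tuple without any subtle higher-order residue phenomena when three or more $j_i$ collide, and verifying that the $n!$-overcounting by orderings of $I$ matches the prefactor cleanly. Conceptually, however, this is just an instance of non-abelian localization: the residue pushes the K-theoretic integral on the GIT quotient $X=\operatorname{Hom}(\CC^n,\CC^N)//\operatorname{GL}_n$ down onto the fixed locus of the maximal torus, anticipating the framework developed in Section~\ref{Sec4}.
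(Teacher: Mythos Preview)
Your proposal is correct and follows exactly the approach the paper indicates: the paper simply states that ``the formula is proved by the direct application of Lefschetz' holomorphic fixed point formula,'' and your Steps~1--3 carry this out, matching the residue at each injective tuple $(j_1,\dots,j_n)$ with the fixed-point contribution of the subset $I=\{j_1,\dots,j_n\}$. Your bookkeeping on signs, the $n!$ overcount, and the Vandermonde cancellation is accurate.
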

Here $\Phi$ is a Laurent polynomial in $P$ and $\Lambda$, symmetric in $P$, $\chi_T$ is the $T$-equivariant holomorphic Euler characteristic, taking values in the representation ring $\ZZ[\Lambda^{\pm}]$ of the torus, and the residue sum is taken over all poles $P_1=\Lambda_{i_1}$, \dots, $P_n=\Lambda_{i_n}$ (with this ordering of the equations, and $i_\a\neq i_\b$). The formula is proved by the direct application of Lefschetz' holomorphic fixed point formula.

The following $q$-hypergeometric series has emerged from a study of spaces of rational curves in the grassmannian based on their quasimap compactifications:
\[
J=\sum_{0\leq d_1,\dots,d_n}\frac{Q^{d_1+\cdots+ d_n}}{\prod_{i=1}^n\prod_{m=1}^{d_i}(1-q^mP_i)^N} \prod_{i,j=1}^n\frac{\prod_{m=-\infty}^{d_i-d_j}(1-q^mP_i/P_j)} {\prod_{m=-\infty}^0(1-q^mP_i/P_j)}.
\]

\begin{Remark}\label{rmk1}
The product on the right can be rearranged as
\[
\prod_{d_i>d_j} q^{\binom{d_i-d_j}{2}} \bigg(\!{-}\frac{P_i}{P_j}\bigg)^{d_i-d_j}\frac{P_j-q^{d_i-d_j}P_i}{P_j-P_i},
\]
and therefore may contain nilpotent factors $P_j-P_i$ in the denominator. It is not hard to see, however, that transposing $P_i$ and $P_j$ does not change the sum of terms with a fixed $d_1+\cdots +d_n$, implying that after clearing the denominators, the numerator becomes divisible by $P_j-P_i$ (namely, it changes sign under the transposition, and hence vanishes when $P_i=P_j$).
\end{Remark}

\begin{Remark}\label{rmk2}
Another consequence of the above rearrangement is that, with the exception of the term $Q^0$, the series consists of reduced rational functions of $q$. Namely, the factor at $Q^{d_1+\cdots+d_n}$ has no pole at $q=0$, and the $q$-degree of the denominator exceeds that of the numerator by
\[
N\sum_{i=1}^n\binom{d_i+1}{2}-\sum_{d_i>d_j}\binom{d_i-d_j+1}{2}\geq N-n+1\geq 2.
\]
\end{Remark}

\begin{Theorem}[{cf.~\cite{Taipale, Wen}}]\label{thm1}
The series $(1-q) J$ is the ``small J-function'' of the grassmannian ${\rm Gr}_{n,N}(\CC)$.
\end{Theorem}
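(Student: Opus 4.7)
The plan is to compute the small J-function
\[
\mathcal{J}(q, Q) = (1-q) + \sum_{d > 0} Q^d (\ev_1)_* \frac{1}{1 - qL_1}
\]
of $X = {\rm Gr}_{n,N}$ by $T^N$-equivariant Lefschetz fixed-point localization on the moduli spaces $\overline{M}_{0,1}(X, d)$ of one-pointed genus-$0$ stable maps, match the outcome termwise with $(1-q)J$, and then pass to the non-equivariant limit $\Lambda_j \to 1$ via Proposition~\ref{prop1}.

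First I would classify the $T^N$-fixed loci. A $T^N$-fixed stable map $f\colon (C, p_1) \to X$ must send $p_1$ to a $T$-fixed coordinate plane $x_I$, indexed by a subset $I = \{i_1 < \cdots < i_n\} \subset \{1,\dots,N\}$; every non-collapsed component of $C$ is a multiple cover of a $T$-invariant Schubert line through $x_I$ to an adjacent fixed point $x_{I'}$ with $|I \triangle I'| = 2$. Grouping by the ``slot'' $\alpha \in \{1, \dots, n\}$ in which the index is altered, each fixed locus is indexed by $I$ together with a multidegree $(d_1, \dots, d_n)$ with $\sum_\alpha d_\alpha = d$.

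Next I would compute the Lefschetz contribution from each fixed locus. After evaluating the insertion $1/(1-qL_1)$ at $p_1$ (which collapses to the familiar $(1-q)$ prefactor when $p_1$ sits on the component mapped to $x_I$), the standard multi-cover and edge-smoothing formulas produce, for the flower in slot $\alpha$, the factor $\prod_{m=1}^{d_\alpha}\prod_j (1 - q^m \Lambda_{i_\alpha}/\Lambda_j)^{-1}$ coming from the $T^N$-character of the virtual normal bundle along the chain of covers, whose $N$ weights at $x_I$ are the ratios $\Lambda_{i_\alpha}/\Lambda_j$. The cross-factor $\prod_{\alpha\neq\beta}\prod_{m=-\infty}^{d_\alpha - d_\beta}(1 - q^m P_\alpha/P_\beta)/\prod_{m=-\infty}^0(1 - q^m P_\alpha/P_\beta)$, with $P_\alpha = \Lambda_{i_\alpha}$, arises from the additional tangent directions of $X$ that mix distinct slots $\alpha \neq \beta$, read off via the splitting $V = P_1 + \cdots + P_n$ of the tautological bundle. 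Summing over $I$ and $(d_1, \dots, d_n)$, and comparing with the $T^N$-equivariant residue formula of Proposition~\ref{prop2}, identifies the Lefschetz output with the equivariant version of $(1-q)J$; the limit $\Lambda \to 1$ then yields the non-equivariant statement.

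The main obstacle will be the second step: the virtual normal bundle at a fixed locus receives contributions from mixed tangent weights $\Lambda_{i_\alpha}/\Lambda_{i_\beta}$ whose multiplicities depend on $d_\alpha - d_\beta$ and which produce apparent poles at $P_\alpha = P_\beta$. The antisymmetrization exhibited in Remark~\ref{rmk1} must be matched with the tangent/obstruction weights on the stable-map side; equivalently, one must carry out the abelian-to-non-abelian comparison from $\big(\CC P^{N-1}\big)^n$ down to ${\rm Gr}_{n,N}$ at the level of fixed-point contributions, and verify that the resulting expression organizes exactly into the rearrangement of Remark~\ref{rmk1}. Getting this edge/node bookkeeping right is the heart of the argument.
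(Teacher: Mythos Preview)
Your classification of the $T^N$-fixed loci in $\overline{M}_{0,1}(X,d)$ is where the proposal breaks. Fixed stable maps are not indexed by a fixed point $x_I$ together with a multidegree $(d_1,\dots,d_n)$; they are indexed by decorated trees whose vertices carry arbitrary fixed points of $X$ and whose edges carry a choice of $1$-dimensional $T$-orbit and a covering degree. For a given $x_I$ as the image of the marked point there are infinitely many combinatorially distinct trees, each contributing its own rational function of $q$, and there is no ``slot'' decomposition that reduces the sum to a product over $\alpha=1,\dots,n$. The clean factor $\prod_{m=1}^{d_\alpha}\prod_j(1-q^m\Lambda_{i_\alpha}/\Lambda_j)^{-1}$ is exactly what comes out of \emph{quasimap} localization, where the fixed loci really are that simple; on the stable-map side no single fixed locus produces it, and the entire difficulty of the quasimap/stable-map comparison is that one cannot read off this match term by term.

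The paper therefore does not attempt to sum over trees. It invokes instead the characterization (due to Brown, K-theoretically in~\cite{Givental:perm2}) of all values of the big J-function by two tests on the fixed-point specializations $\f_\a$ of a candidate series: criterion~(ii), a recursion relation governing the residues of $\f_\a$ at the poles $q=(\Lambda_j/\Lambda_i)^{1/m_0}$, with coefficients expressed through equivariant Euler classes on the $2$-pointed moduli of multiple covers of the $1$-dimensional orbits; and criterion~(i), the requirement that each $\f_\a$, viewed as a meromorphic function near the roots of unity, lie on the cone $\L_{pt}$ of the point target. Criterion~(ii) is verified in Section~\ref{Sec2} by a direct residue computation and a match of Euler classes. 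Criterion~(i) is handled in Section~\ref{Sec3}: one builds $J^T_{(1,\dots,n)}$ from the $q$-exponential $J_{pt}$ by applying a product of $\Gamma$-operators (known to preserve $\L_{pt}$) in auxiliary Novikov variables $Q_1,\dots,Q_n$, and then specializes $Q_1=\cdots=Q_n=Q$ via a $\lambda$-algebra homomorphism. This two-criterion method is precisely what packages the infinite tree recursion into something checkable, and it is the substance missing from your plan; the ``edge/node bookkeeping'' you flag as the heart of the argument cannot be carried out as a finite computation in the way you describe.
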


Recall that the genus-0 quantum K-theory of a target space $X$, the ``big J-function'' is defi\-ned~as
\[
\t \mapsto \J(\t):=1-q+\t(q)+\sum_{d,m,\a} Q^d \phi_\a \bigg\lan \frac{\phi^\a}{1-qL_0},\t(L_1),\dots,\t(L_m)\bigg\ran_{0,m+1,d}^{S_m}.
\]
Here $Q$ is the Novikov's variable, $\{ \phi_\a \}$ and $\{ \phi^\a\}$ are Poincar{\'e}-dual bases in $K^0(X)$, $\t=\sum_k t_k q^k$ is a Laurent polynomial in $q$ with vector coefficients $t_k \in K^0(X)\otimes \QQ[[Q]]$, and the correlator represents the K-theoretic GW-invariant computes a suitable holomorphic Euler characteristic on the moduli spaces of stable maps $X_{g,m+1,d}:=\M_{g,m+1}(X,d)$ with the input (or ``insertion'') at the marked point (with the index $i=0,\dots,m$ in the above formula) of the form $\sum_k (\ev_i^*t_k) L_i^k$, where $L_i$ stands for the universal cotangent line bundle at the $i$th marked point. From among several flavors of such K-theoretic GW-invariants (ordinary as in~\cite{Givental-Tonita}, or permutation-equivariant as in~\cite{Givental:perm9}), we will currently use the {\em permutation-invariant} ones (as the superscript $S_m$ indicates), i.e., computing the super-dimension of the part of the sheaf cohomology on the moduli space $X_{0,m+1,d}$ which is {\em invariant} under permutations of the $m$ marked points with the indices $i=1,\dots,m$ carrying the symmetric inputs~$\t(L_i)$.

The ``small J-function'' is obtained from $\J$ by setting the input $\t=0$. In particular, this eliminates the role of the permutation group, and so $\J(0)$ represents the ``ordinary'' K-theoretic GW-invariants. Thus, according to Theorem~\ref{thm1},
\[
\J(0):=(1-q)+\sum_{d,\a}Q^d\phi_\a\bigg\lan\frac{\phi^\a}{1-qL_0}\bigg\ran_{0,1,d}=(1-q) J.
\]
Theorem~\ref{thm1} is obtained as the non-equivariant limit $\Lambda\to 1$ from the following result about the $T^N$-equivariant version $\J^T$ ``big J-function'' of the grassmannian.

\begin{Theorem}[{cf.~\cite{Taipale, Wen}}]\label{thm2}
$\J^T(0)=(1-q)J^T$, where
\[
J^T=\sum_{0\leq d_1,\dots,d_n}\frac{Q^{d_1+\cdots+d_n}}{\prod_{i=1}^n\prod_{j=1}^N\prod_{m=1}^{d_i}(1-q^mP_i/\Lambda_j)} \prod_{i,j=1}^n\frac{\prod_{m=-\infty}^{d_i-d_j}(1-q^mP_i/P_j)} {\prod_{m=-\infty}^0(1-q^mP_i/P_j)}.
\]
\end{Theorem}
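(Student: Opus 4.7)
The plan is to apply $T^N$-equivariant Kontsevich-style fixed-point localization to the moduli spaces $\M_{0,1}(\mathrm{Gr}_{n,N},d)$ and to compare the outcome with the residues of $(1-q)J^T$ at $P_i=\Lambda_{i_a}$ afforded by Proposition~\ref{prop2}. The $T^N$-fixed points of $X:=\mathrm{Gr}_{n,N}$ are the coordinate subspaces $V_I$ indexed by $n$-subsets $I=\{i_1,\ldots,i_n\}\subset\{1,\ldots,N\}$, with tangent weights $\Lambda_j/\Lambda_{i_a}$ ($j\notin I$, $1\leq a\leq n$), and the $T^N$-invariant rational curves are lines connecting two $V_I,V_{I'}$ whose index sets differ by a single exchange. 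A $T^N$-fixed stable map contracts each irreducible component to some $V_I$ or multiply covers such an invariant line, so localization expresses $\J^T(0)$ as a sum of Lefschetz contributions indexed by decorated trees of $\mathbb{P}^1$s.

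Rather than summing tree topologies directly (which requires tracking vertex valencies, cover orders, and automorphisms), I would invoke the standard recursive characterization of points on the overruled Lagrangian cone of quantum K-theory of $X$: $\J^T(0)$ is uniquely determined by (i) lying on the cone, (ii) having leading term $1-q$ modulo $Q$, and (iii) the property that, upon restriction to any fixed point $V_I$, it is a rational function of $q$ with only simple poles, located at $q^m=\Lambda_{i_a}/\Lambda_j$ (the slopes of $m$-fold $T^N$-invariant-line covers issuing from $V_I$), whose residues are dictated by the contribution of a single invariant line evaluated at the neighboring fixed point. It therefore suffices to verify that $(1-q)J^T$ satisfies (i)--(iii).

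Condition (ii) is immediate. The pole-residue analysis required for (iii) is the heart of the argument and the main obstacle: extracting the residue of $(1-q)J^T$ at $P_i=\Lambda_{i_a}$ produces a rational function of $q$ whose $q$-poles come only from the ``vertical'' factor $\prod_{m=1}^{d_i}(1-q^m P_i/\Lambda_j)^{-1}|_{P_i=\Lambda_{i_a}}$; using the Weyl-factor rearrangement in Remark~\ref{rmk1} to clear the nilpotent $P_i-P_j$ denominators, one checks that the residue at $q^m=\Lambda_j/\Lambda_{i_a}$ reproduces $(1-q)J^T$ restricted to the neighboring fixed point $V_{I'}$ (with $i_a$ replaced by $j$) at multi-degree $(d_1,\ldots,d_a-m,\ldots,d_n)$, multiplied by precisely the $T^N$-character of the $m$-fold cover of the connecting invariant line. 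Condition (i) follows from this same residue structure via Givental--Tonita's characterization theorem. Theorem~\ref{thm1} then follows as the non-equivariant limit $\Lambda\to 1$, which is well-defined by Remark~\ref{rmk2} and evaluated via Proposition~\ref{prop1}.
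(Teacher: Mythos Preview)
Your outline has the right overall architecture---fixed-point localization plus a recursive characterization---but it omits one of the two criteria that actually do the work, and in the process misstates the pole structure of the localized series.

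When you restrict $(1-q)J^T$ to a fixed point, say $(P_1,\dots,P_n)=(\Lambda_1,\dots,\Lambda_n)$, the resulting rational function of $q$ does \emph{not} have poles only at $q^m=\Lambda_{i_a}/\Lambda_j$. The factors with $j=i$ in the first product contribute $\prod_{i}\prod_{m=1}^{d_i}(1-q^m)^{-1}$, i.e., poles at all roots of unity. The fixed-point characterization you are invoking (from \cite{Jeff_Brown} and \cite{Givental:perm2}) therefore has \emph{two} independent criteria: your condition (iii), the recursion controlling residues at the ``geometric'' poles $q=(\Lambda_j/\Lambda_i)^{1/m_0}$ coming from $1$-dimensional orbits, \emph{and} a separate criterion stipulating that the part of each fixed-point localization with poles at roots of unity must represent a value of the big J-function $\J_{pt}$ of the \emph{point} target. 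Your sentence ``Condition (i) follows from this same residue structure via Givental--Tonita's characterization theorem'' is circular: lying on the cone is what you are trying to prove, and the residue recursion alone does not force it.

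In the paper this second criterion is the content of an entire section. One starts from the $q$-exponential $J_{pt}=\prod_i\sum_{d_i}Q_i^{d_i}/\prod_{m=1}^{d_i}(1-q^m)$, which lies in $\L_{pt}$ over the ground $\lambda$-ring $\QQ[\Lambda^{\pm}][[Q_1,\dots,Q_n]]$, and then applies a specific product of pseudo-finite-difference operators $\Gamma_{l,\Lambda}$ (built from $D_{l,\Lambda}=-q\Lambda(1-q^{l\cdot Q\partial_Q})$) that are known to preserve $\L_{pt}$. The result, after the $\lambda$-ring specialization $Q_1=\cdots=Q_n=Q$, is exactly $J^T_{(1,\dots,n)}$ interpreted as a meromorphic function near the roots of unity. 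None of this machinery---the explicit parameterization of $\L_{pt}$, the $\Gamma$-operators, or the $\lambda$-ring change-of-base argument---appears in your proposal, and without it the proof has a genuine gap: the recursion (your (iii)) and the initial condition (your (ii)) do not by themselves pin down a point on the cone.
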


Note that by the very definition (the same as for $\J$ with the correlators taking values in the representation ring $\ZZ[\Lambda^{\pm}]$), the function $\J^T$ is a $Q$-series with coefficients which are rational functions of $q$ with vector values in $K:=K^0_T(X)\otimes \QQ[[Q]]$. Abusing the language we call such series rational functions of $q$, denote the space they form by $\K:=K(q^{\pm})$, and
call it the {\em loop space}. The part $(1-q)+\t(q)$ (``dilaton shift''$+$''input'') belongs to the subspace $\K_{+}$ consisting of Laurent polynomials (they can have poles only at $q=0,\infty$), while the sum of the correlators belongs (as it is not hard to see) to the complementary subspace $\K_{-}=\{ \f \in \K\ | \ \f(0)\neq \infty, \f(\infty)=0.\}$. It follows from Remark~\ref{rmk2} above (which applies to $J^T$ as well) that $(1-q)J^T\equiv 1-q\ \mod \K_{-}$. Thus, the non-obvious statement of Theorem~\ref{thm2} is that $(1-q)J^T$ represents a value of $\J^T$ at all.

The technique of fixed point localization we intend to use goes back to paper~\cite{Jeff_Brown} by
J.~Brown, and was adapted to the K-theoretic situation in~\cite{Givental:perm2}. The technique, applicable whenever the target carries a torus action with isolated fixed points and isolated $1$-dimensional orbits, completely characterizes all values of the big J-function $\J^T$ as the set of those rational functions $\f\in \K$ which pass two tests: criterions (i) and (ii). They are formulated in terms of specializations $\f_{\a}$ of $\f$ to the fixed point of the torus action in $X$. In the case of the grassmannian, take for example the fixed point $V_{1,\dots,n}=\mathop{\rm Span}(e_1,\dots,e_n)$ where (we may assume by choosing the ordering) $(P_1,\dots,P_n)=(\Lambda_1,\dots,\Lambda_n)$:
\[
J^T_{(1,\dots,n)}= \sum_{0\leq d_1,\dots,d_n} \frac{Q^{d_1+\cdots+d_n}}{\prod_{i=1}^n\prod_{j=1}^N\prod_{m=1}^{d_i}(1-q^m\Lambda_i/\Lambda_j)} \prod_{i,j=1}^n\frac{\prod_{m=-\infty}^{d_i-d_j}(1-q^m\Lambda_i/\Lambda_j)} {\prod_{m=-\infty}^0(1-q^m\Lambda_i/\Lambda_j)}.
\]
Note that the 1st factor contains the product (coming from $j=i$):
\[
\frac{1}{\prod_{i=1}^n \prod_{m=1}^{d_i}(1-q^m)}
\]
with poles at roots of unity, while all other poles are elsewhere (at $q=(\Lambda_i/\Lambda_j)^{-1/m}$). Each term of the series considered as rational functions of $q$ can be split (e.g., using partial fraction decomposition) into the sum of a reduced rational function with poles at the roots of unity and a rational function with poles elsewhere. The result will be interpreted (or rather termed) as~a~mero\-mor\-phic function in a neighborhood of the roots of unity.

Criterion (i) stipulates that {\em $\f_\a$, when interpreted as a meromorphic function in a neighborhood of the roots of unity, must represent a value (over a suitable ground ring) of the big J-function of the point target space.} We will return to this criterion in the next section and explain how it~can be verified.

Criterion (ii) controls residues of $\f_\a(q){\rm d}q/q$ at the poles originating from $T$-equivariant covers of $1$-dimensional orbits. Namely, the tangent space to the grassmannian at the fixed point $V_{(1,\dots,n)}$ carries the torus action with the distinct eigenvalues $\Lambda_j/\Lambda_i$, $i=1,\dots,n$, $j=n+1,\dots,N$. Consequently, for each choice of $i$ and $j$ there is a $1$-dimensional orbit, which
compactifies into~$\CC P^1$ connecting this fixed point with another one. For instance, taking $i=1$ and $j=n+1$, we find such an orbit connecting $V_{(1,\dots,n)}$ with $V_{(2,\dots,n+1)}$. Let $\phi\colon \CC P^1\to \CC P^1$ be the map $z\mapsto z^{m_0}$ ramified at $z=0,\infty$ (representing the two fixed points which we call $\a$ and $\b$). Criterion (ii) has the form of the recursion relation:
\[
\Res_{q=(\Lambda_j/\Lambda_i)^{1/m_0}}\f_\a(q) \frac{{\rm d}q}{q} = -\frac{Q^{m_0}}{m_0}
\frac{\Eu(T_\a X)}{\Eu(T_{\phi}X_{0,2,m_0})} \, \f_{\b}\Big|_{q=(\Lambda_j/\Lambda_i)^{1/m_0}},
\]
where $\Eu$ are equivariant K-theoretic Euler classes: of the tangent space to $X$ at $\a$, and to the moduli space of degree-$m_0$ stable maps with 2 marked points at the point represented by the $m_0$-fold cover $\phi$ respectively.

We compute $\Res_{q=(\Lambda_{n+1}/\Lambda_1)^{1/m_0}} (1-q)J^T_{(1,\dots,n)}(q)dq/q$, replacing $d_1$ with $d_1+m_0$, assuming $d_i=d_1$ when $i=n+1$, and using $x:= (\Lambda_{n+1}/\Lambda_1)^{1/m_0}$:
\begin{gather*}
\Res_{q=x} (1-q)J^T_{(1,\dots,n)}(q)\frac{{\rm d}q}{q} = -(1-x)\frac{Q^{m_0}}{m_0}
\\ \qquad
{}\times\frac{1}{\prod_{j=1}^N\prod_{m=1}^{m_0}\,_{(j,m)\neq (n+1,m_0)}\ (1-x^m\Lambda_1/\Lambda_j)} \prod_{j=2}^n\prod_{m=1}^{m_0}\frac{1-x^m\Lambda_1/\Lambda_j}{1-x^m\Lambda_j/\Lambda_{n+1}}
\\ \qquad
{}\times\sum_{0\leq d_1,\dots,d_n}\frac{Q^{d_1+\cdots+d_n}}{\prod_{i=2}^{n+1}\prod_{j=1}^N\prod_{m=1}^{d_i}
(1-x^m\Lambda_i/\Lambda_j)} \prod_{i,j=2}^{n+1}\frac{\prod_{m=-\infty}^{d_i-d_j}
(1-x^m\Lambda_i/\Lambda_j)}{\prod_{m=-\infty}^0(1-x^m\Lambda_i/\Lambda_j)}.
\end{gather*}
The sum together with the factor $1-x$ yields $(1-q)J^T_{(2,\dots,n+1)}(q)|_{q=x}$, so it remains to interpret the recursion coefficient in terms of the Euler classes.

Applying Lefschetz' fixed point formula on ${\rm Gr}_{n,N}(\CC)$, we've already used that $\Eu(T_{(1,\dots,n)}X)\!=\prod_{i=1}^n\prod_{j=n+1}^N(1-\Lambda_i/\Lambda_j)$. In order to compute $\Eu(T_{\phi}X_{0,2,m_0})$, we note that the $1$-dimensional orbit connecting the fixed points $\mathop{\rm Span} (e_1,\dots,e_n)$ and $\mathop{\rm Span} (e_2,\dots,e_{n+1})$ consists of subspaces $V_t:=\mathop{\rm Span}(t e_1+(1-t) e_{n+1}, e_2,\dots,e_{n-1})$. Consequently, restricted to $\CC P^1=\{ V_t \}$, the tangent bundle to the grassmannian, which has the form $\mathop{\rm Hom} (V_t,\CC^N/V_t)$, can be described in terms of~the~Hopf bundle $L$ over $\CC P^1$ and its complementary $L':=\mathop{\rm Span}(e_1,e_{n+1})/L=\Lambda_1\Lambda_{n+1}L^{-1}$ as
\[
\mathop{\rm Hom} \big(L \oplus \mathop{\rm Span}(e_2,\dots,e_n), L' \oplus \mathop{\rm Span}(e_{n+2},\dots,e_N)\big).
\]
On the $m_0$-fold cover $\phi\colon \CC P^1\to \CC P^1$, the contributions to the Euler class of the $T$-modules $H^0\big(\CC P^1; \phi^*L^{-1}\otimes \mathop{\rm Span} (e_j)\big)$ and $H^0\big(\CC P^1; \phi^*L'\otimes \mathop{\rm Span}(e_i)^{-1}\big)$ are respectively
\[
\prod_{m=0}^{m_0}(1-x^m\Lambda_1/\Lambda_j)\qquad \text{and}\qquad \prod_{m=0}^{m_0}(1-x^m\Lambda_i/\Lambda_{n+1}).
\]
The contribution of $H^0\big(\CC P^1; \phi^*L^{-1}\otimes L'\big)$ is (as in the case of $X=\CC P^1$) $\prod_{m=-m_0, m\neq 0}^{m_0}(1-x^m)$. Combing all the contributing factors, we find
\begin{gather*}
\frac{\Eu(T_{(1,\dots,n)}X)}{\Eu(T_{\phi}X_{0,2,m_0})}=
\frac{\prod_{i=1}^n\prod_{j=n+1}^N(1-\Lambda_i/\Lambda_j)}
{\prod_{i=2}^n\prod_{j=n+2}^N(1-\Lambda_i/\Lambda_j)}
\frac{1}{\prod_{m=-m_0, m\neq 0}^{m_0}(1-x^m)}
\\ \hphantom{\frac{\Eu(T_{(1,\dots,n)}X)}{\Eu(T_{\phi}X_{0,2,m_0})}=}
{}\times \frac{1}{\prod_{j=n+2}^N\prod_{m=0}^{m_0}(1-x^m\Lambda_1/\Lambda_j)}
\frac{1}{\prod_{i=2}^n\prod_{m=0}^{m_0}(1-x^m\Lambda_i/\Lambda_{n+1})}.
\end{gather*}
Checking that this expression matches exactly the recursion coefficient for $J^T$ (the middle line in~the formula for the residue) is the matter of a straightforward (though somewhat cumbersome) rearrangement of the factors.

\begin{Remark}\label{rmk3}
Note that the structure of the recursion relations (ii) and the values of the recursion coefficients completely characterize the big J-function of a particular theory, since criterion (i) does not involve any additional choices.
\end{Remark}

\section{Quantum K-theory of the point}
\label{Sec3}

Genus-0 permutation-equivariant K-theoretic GW-invariants of the point are represented by the ``big J-function'' of the form
\[
\J_{pt}(\t):=(1-q)+\t(q)+\sum_{m=2}^{\infty}\chi
\bigg(\M_{0,m+1}/S_m; \frac{1}{1-L_0q}\otimes_{i=1}^m \t(L_i)\bigg).
\]
Here $L_i$ are the universal cotangent line bundles over the Deligne--Mumford spaces $\M_{0,m+1}$. The~holomorphic Euler characteristic $\chi$ on the orbifold $\M_{0,m+1}/S_m$ computes the super-dimen\-sion of the $S_m$-invariant part of the sheaf cohomology on $\M_{0,m+1}$. The insertions $\t(L_i)$ (and hence the input $\t(q)$) can be in fact any rational functions of $L_i$ (respectively of $q$) as long as they don't have poles at roots of unity. On the contrary, each $\chi$-term is a reduced rational function of $q$ with poles only at the roots of unity (of order $\leq m$). Both this and the previous claim easily follow from the general structure of the Lefschetz fixed point formula (applied on $\M_{0,m+1}/S_m$).

As it is explained in~\cite{Givental:perm1, Givental:perm9}, the action of permutation groups on the sheaf cohomology is captured by the above $S_m$-invariants taking values in an arbitrary $\lambda$-algebra, i.e., a ring $R$ equipped with the Adams operations $\Psi^k\colon R\to R$. E.g., for $\tau\in R$
\[
\chi\big(\M_{0,m}/S_m;\otimes_{i=1}^m \big(\tau L_i^d\big)\big) := \frac{1}{m!} \sum_{h\in S_m}\prod_{k=1}^{\infty} (\Psi^k\tau)^{l_k(h)} \str_h H^*\big(\M_{0,m};\otimes_{i=1}^mL_i^d\big),
\]
where $l_k(h)$ denotes the number of cycles of length $r$ in the cycle decomposition of permutation $h$. For mode detail, we refer to~\cite{Givental:perm1} or~\cite{Givental:perm9}, where this example is
extrapolated to general $R$-valued insertions $\t(L_i)$. Note that only $\Psi^r$ with $r>0$ are used in this definition.

We assume that $R$ is complete in the adic topology defined by a certain ideal $R_{+}\subset R$, which is respected by the Adams operations $\Psi^k$ with $k>0$ in the sense that $\Psi^k(R_{+})\subset R_{+}^k$, and that the input $\t$ is ``small'' in the sense that it takes values in $R_{+}$. In fact the latter property guarantees that the $m$-th $\chi$-term of the series $\J_{pt}$ takes values in $R_{+}^m$, with assures $R_{+}$-adic convergence of the series.

The genus-$0$ permutation-equivariant GW-invariants of the point target space are completely described in~\cite{Givental:perm3}. Namely, given a ground $\lambda$-algebra $R$, the range of the big J-function
$\t\mapsto \J_{pt}(\t)$, which is a semi-infinite cone (that we will denote $\L_{pt}$) in the (completed) space of $R$-valued rational functions of $q$ (which we will denote $R(q^{\pm})$) is explicitly parameterized as
\[
(1-q) {\rm e}^{ \sum_{k>0}\Psi^k(\tau)/k(1-q^k)}\big(1+R_{+}[q^{\pm}]\big).
\]
Here $\tau\in R_{+}$, and the notation $R_{+}[q^{\pm}]$ is reserved for the completion in the $R_{+}$-adic topology of~the~space of rational functions of~$q$ which have no poles at roots of unity and take values in~$R_{+}$.

In our arguments, we will take advantage of the possibility to replace one ground $\lambda$-algebra with another related to it by a homomorphism respecting the Adams operations. In simple terms: If some
$\f\in R(q^{\pm})$ is known to lie in $\L_{pt}$ (over a given ground ring~$R$), i.e., the part $\f_{-}$ with poles at the roots of unity represents K-theoretic GW-invariants with the input defined by~the part $\f_{+}$ with poles away from roots of unity, the same will remain true when the values of some parameters (coordinates on $\operatorname{Spec} R$) are specialized in a way commuting with $\Psi^k$ for all~$k>0$.

Another important property of the cone $\L_{pt}$ that we will rely on is its invariance under a~cer\-tain group of (pseudo) finite-difference operators. Namely, let $R=\QQ[[Q]]$, where $\Psi^kQ=Q^{|k|}$, $k=\pm 1,\pm 2,\dots$, and let $D(q^{Q\p_Q},Q)$ be a finite difference operator (which we should assume ``small'' in $R_{+}$-adic sense to assure convergence). It is almost obvious that the linear vector field $\f \mapsto D\f$ in $R(q^{\pm})$ is tangent to $\L_{pt}$, and therefore ${\rm e}^D$ preserves $\L_{pt}$. Moreover, according to a~result from~\cite{Givental:perm4}, $\L_{pt}$ is preserved by the operator
\[
\f\mapsto {\rm e}^{ \sum_{k>0} \Psi^k(D(q^{kQ\p_Q},Q))/k(1-q^k)} \f.
\]

Our goal in this section is to verify that the series $J^T_{(1,\dots,n)}$ from the previous section satis\-fies criterion~(i), i.e., that it represents a value of $\J_{pt}$ when interpreted as a meromorphic function of $q$ in a neighborhood of roots of unity. For this, we begin with the ground ring $R=\QQ\big[\Lambda_1^{\pm},\dots,\Lambda_N^{\pm}\big][[Q_1,\dots,Q_n]]$, with the Adams operations acting by $\Psi^k\big(\Lambda_j^{\pm 1}\big)=\Lambda_j^{\pm k}$, $\Psi^k(Q_i)=Q_i^{|k|}$, and set $R_{+}=(Q_1,\dots,Q_n)$. In particular, taking $\tau=Q_1+\cdots+Q_n$, we~find that $\L_{pt}\ni (1-q) J_{pt}$, where $J_{pt}$ is the following product of $q$-exponential functions:
\begin{gather*}
J_{pt}:= {\rm e}^{\sum_{i=1}^n\sum_{k>0}Q_i^k/k(1-q^k)}
= \prod_{i=1}^n \sum_{d_i=0}^{\infty}\frac{Q_i^{d_i}}{\prod_{m=1}^{d_i}(1-q^m)}
 = \sum_{0\leq d_1,\dots,d_n}\frac{Q_1^{d_1}\cdots Q_n^{d_n}}{\prod_{i=1}^n\prod_{m=1}^{d_i}(1-q^m)}.
 \end{gather*}

Following~\cite{Givental:perm4}, we are going to use $Q$-independent finite-difference operators of the form $D_{l,\Lambda}:=-q \Lambda (1-q^{l\cdot Q\p_Q})$, where $l\cdot Q\p_Q:=\sum_i l_iQ_i\p_{Q_i}$, and $\Lambda$ is a formal variable added to the ground ring, $\Psi^k\Lambda:=\Lambda^{|k|}$. The corresponding $\L_{pt}$-preserving operator is
\[
\Gamma_{l,\Lambda}:= {\rm e}^{ -\sum_{k>0}\Lambda^k
(1-q^{kl\cdot Q\p_Q})q^k/k(1-q^k)}.
\]
This expression is in fact the asymptotical expansion (near the unit circle on the $q$-plane) of the ratio:
\[
\prod_{m=-\infty}^0\big(1-\Lambda q^{l\cdot Q\p_Q}q^m\big)/ \prod_{m=-\infty}^0(1-\Lambda q^m).
\]
The ratio and its asymptotical expansion act on monomials $Q^d=Q_1^{d_1}\cdots Q_n^{d_n}$ the same way:
\[
\Gamma_{l,\Lambda}Q^d= Q^d\ \frac{\prod_{m=-\infty}^0(1-\Lambda q^{m+(l\cdot d)})}{\prod_{m=-\infty}^0(1-\Lambda q^m)} = Q^d\ \frac{\prod_{m=-\infty}^{l\cdot d}(1-\Lambda q^m)}{\prod_{m=-\infty}^0(1-\Lambda q^m)}.
\]
Note that the right hand side is a rational function of $q$ with poles away from roots of unity. Considering $\Gamma_{l,\Lambda}(1-q)J_{pt}$ as a point of $\L_{pt}$ over the ground ring $R[[\Lambda]]$, we conclude therefore, that being a $Q$-series with coefficients which are rational function of $q$ with coefficients polynomial in $\Lambda$, it is a point of $\L_{pt}$ over $R[\Lambda]$. Thus, it will turn into a point of $\L_{pt}$ over $R$ when $\Lambda$ is replaced by any non-trivial monomial from $\QQ[\Lambda_1^{\pm},\dots,\Lambda_N^{\pm}]$, e.g., by $\Lambda_i/\Lambda_j$ with $i\neq j$.

In order to complete our fixed point localization proof of Theorem~\ref{thm2}, we apply to $J_{pt}$ the following operators (where $l=\1_i$ contains $1$ in the $i$-th position and $0$ everywhere else):
\begin{gather*}
\bigg(\prod_{i=1}^n\prod_{j=1, j\neq i}^N \Gamma^{-1}_{\1_i,\Lambda_i/\Lambda_j}\bigg) \bigg(\prod_{i,j=1}^n\Gamma_{\1_i-\1_j,\Lambda_i/\Lambda_j}\bigg)\ J_{pt}
\\ \qquad
{}=\sum_{0<d_1,\dots,d_n}\frac{Q_1^{d_1}\cdots Q_n^{d_n}}{\prod_{i=1}^n\prod_{j=1}^N\prod_{m=1}^{d_i}(1-q^m\Lambda_i/\Lambda_j)} \prod_{i,j=1}^n\frac{\prod_{m=-\infty}^{d_i-d_j}(1-q^m\Lambda_i/\Lambda_j)}{\prod_{m=-\infty}^0(1-q^m\Lambda_i/\Lambda_j)}.
\end{gather*}
The terms of the last sum are interpreted as meromorphic functions in the neighborhood of~roots of unity, i.e., with poles (which come from the factors with $i=j$ in the left product) at the roots of unity only.

When multiplied by $1-q$, the latter series lies in $\L_{pt}$ over the ground $\lambda$-algebra $R=\QQ[\Lambda^{\pm}][[Q_1,\dots,Q_n]]$. The substitution $Q_1=\cdots=Q_n=Q$ (which induces a homomorphism of~$\lambda$-algebras $R\to R_0:=\QQ[\Lambda^{\pm}][[Q]]$) yields therefore a series which lies in the range $\J_{pt}$ over~$R_0$. It actually coincides with the localization $(1-q) J^T_{(1,\dots,n)}$ of $J^T$ at the indicated fixed point. Therefore $(1-q) J^T_{(1,\dots,n)}$, when interpreted as a series of meromorphic functions near roots of~unity, satisfies criterion (i). Due to the Weyl group symmetry between all fixed points, and between all $1$-dimensional orbits connecting them, this finishes the proof of Theorem~\ref{thm2}.

\section{Non-abelian localization and explicit reconstruction}
\label{Sec4}

The approach to computing GW-invariants of GIT quotients via non-abelian localization (and eventually quasimap compactifications) was proposed by A.~Bertram, I.~Ciocan-Fontanine and B.~Kim~\cite{BCK2} following their proof~\cite{BCK1} of the Hori--Vafa conjecture. The conjecture (formulated in the appendix to~\cite{Hori-Vafa}) gave a novel proposal for the mirrors of GIT quotients $\CC^M//G$. The idea,
illustrated by K.~Hori and C.~Vafa in the example of the grassmannians, was to replace the factorization by a semi-simple $G$ with the (cohomologically equivalent to it) succession of the factorizations by its maximal torus $T$ and then by its Weyl group $W=N(T)/T$. The first step yields a toric manifold, whose mirror and genus-$0$ GW-invariants are well-understood. In the case of the grassmannian ${\rm Gr}_{n,N}=\mathop{\rm Hom}(\CC^n,\CC^N)//{\rm GL}_n(\CC)$, it is the product $\tilde{X}:=(\CC P^{N-1})^n$ of~projective spaces. Its small $T^N$-equivariant (K-theoretic) J-function is $(1-q) J_{\tilde{X}}^T$, where
\[
J^T_{\tilde{X}}=\sum_{d_1,\dots,d_n\geq 0}\frac{Q_1^{d_1}\cdots Q_n^{d_n}}{\prod_{j=1}^N\prod_{i=1}^n\prod_{m=1}^{d_i}(1-q^mP_i/\Lambda_j)},
\]
where $P_i$ are the Hopf bundles over the factors. The second step can be described this way:
\[
J^T=J^T_{\Pi \mathfrak{g}/\mathfrak{t}}|_{Q_1=\cdots=Q_n=Q, \Lambda_0=1}, \qquad \text{where}\quad J^T_{\Pi \mathfrak{g}/\mathfrak{t}}:=\prod_{i\neq j} \Gamma_{\1_i-\1_j,\,\Lambda_0 P_i/P_j} \,J^T_{\tilde{X}}.
\]
The $\Gamma$-operators here,
\[
\Gamma_{\1_i-\1_j, \Lambda_0 P_i/P_j}={\rm e}^{ -\sum_{k>0} (\Lambda_0P_i/P_j)^k\big(1-q^{kQ_i\p_{Q_i}-kQ_j\p_{Q_j}}\big)q^k/k(1-q^k)},
\]
correspond to the roots of $\mathfrak{g}$, i.e., in our case of $\mathfrak{g}=\mathfrak{gl}_n(\CC)$ to the line bundles $P_i/P_j$ for $i\neq j$. Explicitly
\[
J^T_{\Pi \mathfrak{g}/\mathfrak{t}}=\sum_{d_1,\dots,d_n\geq 0}\frac{Q_1^{d_1}\cdots Q_n^{d_n}}{\prod_{j=1}^N\prod_{i=1}^n\prod_{m=1}^{d_i}(1-q^mP_i/\Lambda_j)}
\prod_{i\neq j}\frac{\prod_{m=-\infty}^{d_i-d_j}(1-q^m\Lambda_0P_i/P_j)}{\prod_{m=-\infty}^0
(1-q^m\Lambda_0P_i/P_j)}.
\]
According to~\cite{Givental:perm5}, $(1-q)J^T_{\Pi {\mathfrak{g}/\mathfrak{t}}}$ represents a value of the big J-function of the {\em super-space} $\Pi E$, where $E$ is a vector bundle over $\tilde{X}$, equal to $\oplus_{i\neq j}P_j/P_i$ in the case at hands, which is associated with the adjoint action of the maximal torus on $\mathfrak{g}/\mathfrak{t}$, and $\Pi$ indicates the parity change of the fibers. By definition, the quantum K-theory of such a super-space is obtained by systematically replacing the virtual structure sheaves $\O_{g,m,d}$ of the moduli spaces $\tilde{X}_{g,m,d}$ with $\O_{g,m,d}\otimes \Eu_{\CC^{\times}}(\ft_*\ev^*E)$, where the subscript in the K-theoretic Euler class indicates it is equi\-variant with respect to the scalar action of $\Lambda_0\in \CC^{\times}$ on the fibers of $E$, and $\ft\colon \tilde{X}_{g,m+1,d}\to \tilde{X}_{g,m,d}$ and $\ev\colon \tilde{X}_{g,m+1,d}\to \tilde{X}$ are respectively the forgetting of and evaluation at the last marked point.

On the other hand, the {\em explicit reconstruction} results of~\cite{Givental:perm8} tell us how to parameterize the entire big J-function of a toric manifold (or super-manifold) from one value of it. Namely, the range of the big J-function, $\L_{\Pi \mathfrak{g}/\mathfrak{t}}$ in our example, is invariant under the action of a huge group,~${\mathcal P}$, of pseudo-finite-difference operators in Novikov's variables $Q_1,\dots,Q_n$. It is generated by the exponentials ${\rm e}^D$ of any ($R_+$-adically small) finite-difference operators $D(Pq^{Q_1\p_Q}, Q, q)$, and by operators of the form\footnote{Note that above operators $\Gamma_{\1_i-\1_j,\Lambda_0 P_i/P_j}$ are the compositions of the operators of multiplication by ${\rm e}^{\sum_{k>0}\Psi^k(\Lambda_0 P_i/P_j)/k(1-q^{-k})}$, whose cumulative effect, according to the Adams--Riemann--Roch (see~\cite{Givental:perm10}), is to transform $\L_{\tilde{X}}$ to $\L_{\Pi \mathfrak{g}/\mathfrak{t}}$, and of the operators of this form with $D= q\Lambda_0 P_iq^{Q_i\p_{Q_i}}/P_jq^{Q_j\p_{Q_j}}$ which preserve $\L_{\Pi \mathfrak{g}/\mathfrak{t}}$.}
\[
{\rm e}^{ \sum_{k>0} \Psi^k\big(D(Pq^{kQ\p_Q},Q,q)/k(1-q^k\big)}.
\]
The orbit of $(1-q) J^T_{\tilde{X}}$ under this group is the whole of $\L_{\tilde{X}}$ $\big($and moreover, picking suitable operators as described in~\cite{Givental:perm8} one obtains an explicit parameterization of $\L_{\tilde{X}}\big)$.

\begin{Theorem}\label{thm3}
Elements of the orbit of $(1-q) J^T_{\Pi \mathfrak{g}/\mathfrak{t}}$ under the subgroup ${\mathcal P}^W$ of the operators invariant with respect to the Weyl group, in the specialization $Q_1=\cdots=Q_n=Q$, $\Lambda_0=1$ turn into values of the big J-function of the grassmannian.
\end{Theorem}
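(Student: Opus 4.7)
The plan is to show that any element $f$ in the ${\mathcal P}^W$-orbit of $(1-q)J^T_{\Pi \mathfrak{g}/\mathfrak{t}}$ satisfies, after the specialization $Q_1=\cdots=Q_n=Q$, $\Lambda_0=1$, the two fixed-point-localization criteria (i) and (ii) from Section~\ref{Sec2} that completely characterize values of the big J-function of $X={\rm Gr}_{n,N}$. Weyl-invariance is essential: only $S_n$-symmetric rational functions of $P_1,\dots,P_n$ represent $K^0(X)$-valued rational functions of $q$.

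Criterion (i) will be inherited. By assumption, $f\in \L_{\Pi \mathfrak{g}/\mathfrak{t}}$, so its localization at any toric fixed point $(\Lambda_{i_1},\dots,\Lambda_{i_n})$ of $\tilde{X}$ represents, near the roots of unity, a value of $\J_{pt}$ over an extended ground $\lambda$-algebra. The substitution $Q_i=Q$, $\Lambda_0=1$ defines a $\lambda$-algebra homomorphism (it commutes with every $\Psi^k$ for $k>0$) and therefore preserves this point-target property, exactly as exploited at the end of Section~\ref{Sec3}. At a grassmannian fixed point $V_{(i_1,\dots,i_n)}$ (distinct indices), Weyl-invariance of $f$ guarantees that this localization depends only on the unordered set $\{i_1,\dots,i_n\}$, so it can unambiguously be assigned to the grassmannian fixed point.

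Criterion (ii) is the substantive step. The residues of $f_{V_{(i_1,\dots,i_n)}}(q)\,{\rm d}q/q$ are controlled by the analogue of criterion (ii) that holds for $\L_{\Pi \mathfrak{g}/\mathfrak{t}}$: the standard $\tilde{X}$-recursion twisted by the $\Lambda_0$-equivariant Euler class factors contributed by the bundle $E$. A toric 1-dimensional orbit out of $(\Lambda_{i_1},\dots,\Lambda_{i_n})$ replaces a single coordinate $\Lambda_{i_a}$ by some $\Lambda_k$ with $k\neq i_a$; either $k\notin\{i_1,\dots,i_n\}$, matching a 1-dimensional orbit of $X$ between two grassmannian fixed points, or $k\in\{i_1,\dots,i_n\}$, producing a toric target with repeated indices that does not lift any grassmannian fixed point. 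In the second case, at $\Lambda_0=1$ the fiber Euler class $\Eu(E)$ at the target equals $\prod_{a'\neq b'}(1-\Lambda_{i_{b'}}/\Lambda_{i_{a'}})=0$, so the twisted recursion coefficient vanishes and the spurious pole contributes zero residue after specialization. In the first case, the same product equals the tangent Euler class ratio $\Eu(T\tilde{X})/\Eu(TX)$ at the matched fixed points; an analogous identity at the moduli-space level identifies $\Eu(\ft_*\ev^*E)|_{\Lambda_0=1}$ at the point represented by the $m_0$-fold cover $\phi$ with $\Eu(T_\phi\tilde{X}_{0,2,m_0})/\Eu(T_\phi X_{0,2,m_0})$. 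Combining these two identities, the $\tilde{X}$-recursion coefficient converts exactly into the grassmannian recursion coefficient computed at the end of Section~\ref{Sec2}.

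The main obstacle is the moduli-level Euler-class identity, namely that $\Eu(\ft_*\ev^*E)|_{\Lambda_0=1}$ evaluated at the point of $\tilde{X}_{0,2,m_0}$ represented by $\phi$ equals the ratio of moduli tangent Euler classes on $\tilde{X}_{0,2,m_0}$ and $X_{0,2,m_0}$. This reduces to an explicit character decomposition of $H^0\big(\CC P^1;\phi^*(\cdot)\big)$ summand-by-summand over the line components $P_j/P_i$ of $E$, entirely parallel to the cover-by-cover computation of $\Eu(T_\phi X_{0,2,m_0})$ already carried out in Section~\ref{Sec2}. Once this is in place, the final matching of recursion coefficients is a straightforward rearrangement of factors, and both criteria (i) and (ii) for $X$ are verified for every specialized element of the ${\mathcal P}^W$-orbit.
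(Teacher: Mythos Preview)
Your approach is the same as the paper's: verify criteria (i) and (ii) of the fixed-point characterization for the specialized element. The logic for (i) via $\lambda$-algebra homomorphisms matches the paper exactly, and your reduction of (ii) to the matching of recursion coefficients is also what the paper does (it points back to the computation in Section~\ref{Sec2} and notes that the coefficient is unchanged by applying $D\in{\mathcal P}$, either by a direct commutation calculation or simply by ${\mathcal P}$-invariance of $\L_{\Pi\mathfrak{g}/\mathfrak{t}}$).

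One point of difference worth flagging: you explicitly treat the ``spurious'' toric orbits (those landing at a $\tilde{X}$-fixed point with repeated indices, hence not lifting a grassmannian fixed point), whereas the paper passes over this in silence. Your argument that the recursion coefficient acquires a vanishing factor at $\Lambda_0=1$ is correct in spirit, but the phrasing ``the fiber Euler class $\Eu(E)$ at the target equals $0$'' is slightly imprecise: what actually appears in the twisted recursion coefficient is $\Eu_{\CC^\times}(\ft_*\phi^*E)/\Eu_{\CC^\times}(E_\alpha)$, and the zero comes from a factor $(1-\Lambda_0)$ inside $\Eu_{\CC^\times}(\ft_*\phi^*E)$ contributed by the summand $P_{i'}/P_i$ that becomes trivial at the repeated-index endpoint. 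Since $\Eu_{\CC^\times}(E_\alpha)$ is nonzero (the indices at $\alpha$ are distinct) and $f_\beta|_{q=x}$ is regular in $\Lambda_0$, the residue indeed vanishes in the limit. This is a genuine ingredient of the argument; making it explicit, as you do, is an improvement over the paper's presentation.
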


We conjecture that a similar result holds universally for non-singular GIT quotients, i.e., that $\L_{C//G}$ is obtained from $\L_{\Pi \mathfrak{g}/\mathfrak{t}}^W$ (where $\Pi \mathfrak{g}/\mathfrak{t}$ is the super-space over the base $C//T$ defined as explained above) by specializing the Novikov ring to its $W$-invariant part, and passing to the limit $\Lambda_0=1$.

\begin{proof}
The proof of the theorem is based on $T^N$-fixed point localization. It should be obvious after Section \ref{Sec3} that the criterion (i) of the fixed point method is invariant under ${\mathcal P}$ even before the specialization to $Q_i=Q, \Lambda_0=1$. To verify (ii), take localizations $\big(J^T_{\Pi \mathfrak{g}/\mathfrak{t}}\big)_{(1,\dots,n)}$ and $\big(J^T_{\Pi \mathfrak{g}/\mathfrak{t}}\big)_{(n+1, 2,\dots,n)}$ of $J^T_{\Pi \mathfrak{g}/\mathfrak{t}}$ (thinking of the fixed points $\mathop{\rm Span}(e_1,\dots,e_n)$ and $\mathop{\rm Span}(e_2,\dots,e_{n+1})$ in the grassmannian). The ambiguity in the ordering of the values $P_i=\Lambda_{i'}$ becomes irrelevant in the limit $Q_1=\dots=Q_n=Q$ due to the $W$-symmetry. Before the limit, we take here $P_1=\Lambda_1$,~\dots, $P_n=\Lambda_n$ for the first fixed point, and $P_1=\Lambda_{n+1}$, $P_2=\Lambda_2$,~\dots, $P_n=\Lambda_n$ for the second. For $x=(\Lambda_1/\Lambda_{n+1})^{-1/m_0}$, we have
\[
\Res_{q=x} \big(J^T_{\Pi \mathfrak{g}/\mathfrak{t}}\big)_{(1,\dots,n)}\frac{{\rm d}q}{q} = -\frac{Q_1^{m_0}}{m_0} {\rm Coeff}_{(1,\dots,n)}^{(2,\dots,n+1)}(m_0)\big(J^T_{\Pi \mathfrak{g}/\mathfrak{t}}\big)_{(n+1, 2,\dots,n)}\big|_{q=x}.
\]
This is simply the recursion relation (ii) for the target space $\tilde{X}=\big(\CC P^{N-1}\big)^n$ corresponding to the 1-dimensional $T^N$-orbit connecting two fixed points, $\mathop{\rm Span}(e_1)$ and $\mathop{\rm Span}(e_{n+1})$ in projection to the first factor $\CC P^{N-1}$, and constant (and equal to $\mathop{\rm Span}(e_i)$, $i=2,\dots,n$) in the other projections. The recursion coefficient here turns into the correct one for the grassmannian in~the limit $\Lambda_0=1$ and $Q_1=Q$.

Note that operators $P_i^kq^{kQ_i\p_{Q_i}}$ specialize to $\Lambda_i^k q^{kQ_i\p_{Q_i}}$ at the fixed point $\mathop{\rm Span}(e_1,\dots,e_n)$, and for $i>1$ commute with $Q_1^{m_0}$, while for $i=1$ we have
\[
\Lambda_1^k q^{kQ_1\p_{Q_1}} Q_1^{m_0}=Q_1^{m_0}q^{km_0} \Lambda_1^k q^{kQ_1\p_{Q_1}} \equiv_{\mod 1-q^{m_0}\frac{\Lambda_1}{\Lambda_{n+1}}}
Q_1^{m_0}\Lambda_{n+1}^{k} q^{kQ_1\p_{Q_1}}.
\]
This implies that for any finite difference operator $D$ regular at $q=(\Lambda_1/\Lambda_{n+1})^{-1/m_0}$, the localizations $\big(D J^T_{\Pi \mathfrak{g}/\mathfrak{t}}\big)_{(1,\dots,n)}$ and $\big(D J^T_{\Pi \mathfrak{g}/\mathfrak{t}}\big)_{(n+1,2,\dots,n)}$ of $D J^T_{\Pi \mathfrak{g}/\mathfrak{t}}$ also satisfy the above recursion relation with the same recursion coefficient. In fact this direct verification is not even necessary, since it simply elucidates in terms of fixed point localization the general fact that $\L_{\Pi \mathfrak{g}/\mathfrak{t}}$ is ${\mathcal P}$-invariant.

We conclude that when $D$ is $W$-invariant, $(1-q) D J^T_{\Pi \mathfrak{g}/\mathfrak{t}}$ specializes at $Q_1=\dots=Q_n=Q$ and $\Lambda_0=1$ into a point in the loop space $\K$ (corresponding to the grassmannian) which satisfies the correct recursion relation, and hence belongs to $\L_{{\rm Gr}_{n,N}}$.
\end{proof}

\begin{Remark}\label{rmk4}
Of course, the above argument applies more generally than the grassmannian example, and works whenever a torus ($T^N$ in this case) acts on $C//T^n$ and $C//G$ with isolated fixed points and isolated one-dimensional orbits. In particular, it applies to {\em twisted} quantum K-theories studied in~\cite{Givental:perm11} and generalizing the above transition from $\tilde{X}=\big(\CC P^{N-1}\big)^n$ to $\Pi \mathfrak{g}/\mathfrak{t}$. Namely, let $E=E(P_1,\dots,P_n) \in K^0_{T^N}({\rm Gr}_{n,N})$ be a virtual vector bundle (for this, $E$ needs to~be symmetric in $P_i$). It can be used to ``twist'' the virtual structure sheaves of the moduli spaces of stable maps~-- for both targets, $\Pi \mathfrak{g}/\mathfrak{t}$ and ${\rm Gr}_{n,N}$:
\[
\O_{g,m,d} \mapsto \O_{g,m,d}\otimes {\rm e}^{ \sum_{k\neq 0} \Psi^k(\mu_k \ft_*\ev*E)/k},
\]
where $\mu_k$ are some prefixed elements of the ground ring $R$ (and, abstractly speaking, should better be taken from $R_{+}$ as a precaution lest the modifying expression diverges). Then the big J-functions in the twisted quantum K-theories of $\Pi \mathfrak{g}/\mathfrak{t}$ and ${\rm Gr}_{n,N}$ are related the same way as described in the theorem: For any $W$-invariant value of the twisted J-function of $\Pi \mathfrak{g}/\mathfrak{t}$ (in place of $J^T_{\Pi \mathfrak{g}/\mathfrak{t}}$), the elements of its orbit under ${\mathcal P}^W$ in the limit $Q_1=\dots=Q_n=Q$ specialize into values of the big J-function in the twisted quantum K-theory of the grassmannian.
\end{Remark}

\section[Balanced I-functions and T* Gr {n,N}]{Balanced I-functions and ${\boldsymbol T^*{\mathbf{Gr}}_{n,N}}$}\label{Sec5}

In some recent literature motivated by representation theory (see, e.g.,~\cite{KPSZ, Ok, PSZ}), quantum K-theory of {\em symplectic} quiver varieties plays a role, and among them, the cotangent bundles of~the grassmannians (rather than the grassmannians {\em per se}) take the place of the target spaces. K-theoretic computations in the quasimap compactifications of spaces of rational curves in~such targets lead A.~Okounkov and his followers to $q$-hypergeometric functions quite interesting from the point of view of the theory of integrable systems. To illustrate one specific property (appa\-rently important in their theory for technical reasons) consider the series
\begin{gather*}
I^T= \sum_{0\leq d_1,\dots,d_n}Q^{d_1+\cdots+d_n} \prod_{i=1}^n\prod_{j=1}^N\prod_{m=1}^{d_i}\frac{1-q^mYP_i/\Lambda_j} {1-q^mP_i/\Lambda_j}
\\ \hphantom{I^T=\sum_{0\leq d_1,\dots,d_n}Q^{d_1+\cdots} }
{}\times \prod_{i,j=1}^n\frac{\prod_{m=-\infty}^{d_i-d_j}(1-q^mP_i/P_j)}{\prod_{m=-\infty}^0(1-q^mP_i/P_j)}
\frac{\prod_{m=-\infty}^0(1-q^mY P_i/P_j)}{\prod_{m=-\infty}^{d_i-d_j}(1-q^mY P_i/P_j)}.
\end{gather*}
Here $Y\in \CC^{\times}$ (denoted in~\cite{Ok} and elsewhere by $\hbar$) represents the circle acting by scalar multiplication on the fibers of a vector bundle over the compact base (which is meant to be $T^*{\rm Gr}_{n,N}$ in our example). Note that the series is formed of fractions $(1-q^mY X)/(1-q^m X)$, which are bounded both as $q\to 0$ and $q\to \infty$. In the fixed-point computations on quasimap spaces of~symplectic targets, this property of generating functions being {\em balanced} (in terminology of~\cite{Liu}) is a by-product of tensoring the virtual structure sheaf with the square root of the determinant bundle of the moduli space (i.e., in effect computing indexes of real Dirac operators rather than holomorphic Euler characteristics). The questions we will address here are about the place of~the
series $I^T$ and its close counterparts in the ``genuine'' (i.e., based on stable map compactifications) quantum K-theory of the grassmannian: Does $I^T$ represent a value of the big J-function of {\em any} version of quantum K-theory, and if so, then what version and on which space? Is it the small J-function in that theory? We will give several different affirmative answers to the first question, and negative to the second.

\begin{Theorem}\label{thm4}
The series $(1-q) I^T$ represents a value of the torus-equivariant, permutation-invariant big J-function of $\Pi T {\rm Gr}_{n,N}$ (the odd tangent bundle of the grassmannian).
\end{Theorem}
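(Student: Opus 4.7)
The plan is to realize $(1-q)I^T$ via the non-abelian localization framework of Theorem~\ref{thm3} applied to a twisted theory on the toric base $\tilde X=(\CC P^{N-1})^n$, in the mode of Remark~\ref{rmk4}. The key algebraic observation is that $I^T$ is obtained from $J^T$ by inserting two families of balanced correction factors: the numerator $\prod_{i,j}\prod_{m=1}^{d_i}(1-q^m YP_i/\Lambda_j)$ and the ratio $\prod_{i\neq j}\frac{\prod_{m=-\infty}^0(1-q^m YP_i/P_j)}{\prod_{m=-\infty}^{d_i-d_j}(1-q^m YP_i/P_j)}$. These are precisely the contributions to the twisted J-function arising from the $K$-theoretic Euler class (with scalar equivariant parameter $Y$) of a virtual bundle on $\tilde X$ whose $T^N$-Chern roots descend, through non-abelian localization, to those of $T{\rm Gr}_{n,N}=T\tilde X\ominus\mathfrak g/\mathfrak t$ on the grassmannian (modulo trivial summands that contribute only overall scalars).

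Concretely, I would exhibit
\[
I^T \ = \ \Phi \cdot J^T_{\tilde X}\big|_{Q_1=\cdots=Q_n=Q,\,\Lambda_0=1},
\]
where $\Phi$ is a Weyl-invariant product of $\Gamma$-type pseudo-finite-difference operators from $\mathcal P^W$. The relevant operators are $\prod_{i,j}\Gamma_{\1_i,\,YP_i/\Lambda_j}$ (producing the balanced numerator), $\prod_{i\neq j}\Gamma_{\1_i-\1_j,\,P_i/P_j}$ (producing the non-abelian $\mathfrak g/\mathfrak t$ factors, exactly as in Theorem~\ref{thm3} at $\Lambda_0=1$), and $\prod_{i\neq j}\Gamma^{-1}_{\1_i-\1_j,\,YP_i/P_j}$ (producing the $Y$-twisted inverse). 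Each of these families is permuted internally by the $S_n$-action on the indices, so the total $\Phi$ lies in $\mathcal P^W$. By the invariance of the big J-function under $\mathcal P$ recalled in Section~\ref{Sec3}, $\Phi\cdot(1-q)J^T_{\tilde X}$ lies in the range of the big J-function of the twisted toric theory on $\tilde X$; by Remark~\ref{rmk4}, its specialization $Q_1=\cdots=Q_n=Q$ descends to a value of the corresponding twisted big J-function on the grassmannian, which by the virtual identity $T{\rm Gr}_{n,N}=T\tilde X\ominus\mathfrak g/\mathfrak t$ is the permutation-invariant quantum K-theory of $\Pi T{\rm Gr}_{n,N}$.

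The main obstacle is the careful bookkeeping needed to verify that the chosen $\Gamma$-operator composition reproduces the balanced form of $I^T$ in all degree regimes (both $d_i>d_j$ and $d_i<d_j$ for the $\mathfrak g/\mathfrak t$-type factors), not merely as an asymptotic expansion near roots of unity, and that the $Y$-dependent scalar contributions from the trivial summands in $T\tilde X$ and $V\otimes V^*$ cancel correctly so as to identify the net twist with $\Pi T{\rm Gr}_{n,N}$ in the paper's conventions. Once this identification is in place, the fixed-point recursion coefficient (criterion~(ii)) for the twisted theory on ${\rm Gr}_{n,N}$ is automatically matched by the $W$-invariance of $\Phi$ together with the same structural argument as in the proof of Theorem~\ref{thm3} and Remark~\ref{rmk4}, the extra $Y$-dependence being absorbed into the equivariant Euler classes of the tangent spaces at the fixed points.
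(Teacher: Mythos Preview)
Your approach is valid but different from the paper's. The paper proves Theorem~\ref{thm4} by \emph{direct} fixed-point localization on ${\rm Gr}_{n,N}$: it writes down the localization $I^T_{(1,\dots,n)}$, notes that criterion~(i) follows from the analysis in Sections~\ref{Sec2}--\ref{Sec3}, and then explicitly computes the recursion coefficient at $q=(\Lambda_{n+1}/\Lambda_1)^{1/m_0}$, identifying the extra $Y$-dependent factor as $\Eu_{\CC^{\times}}(\ft_*\phi^*(TX))/\Eu_{\CC^{\times}}(T_{(1,\dots,n)}X)$ (including the subtle factor $1-Y$ coming from the automorphism direction). Your route via Theorem~\ref{thm3}/Remark~\ref{rmk4} and $\Gamma$-operators is precisely the alternative the paper itself points to at the start of Section~\ref{Sec6} (``A somewhat different proof of Theorem~\ref{thm4} could be derived from Theorem~\ref{thm3} together with Remark~\ref{rmk4}\dots''), there illustrated for $E=V^{\vee}$ rather than $E=T{\rm Gr}_{n,N}$. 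The paper's direct computation has the advantage of making the geometric origin of each factor transparent (and in particular explains the $1-Y$ term, which is reused in Corollary~\ref{cor1}); your approach is more mechanical and packages the twisting as an operator identity, which is what Section~\ref{Sec6} does systematically.

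One imprecision to fix: you assert that the total operator $\Phi$ lies in ${\mathcal P}^W$, but this is not quite right. The $\Gamma$-operators you list are, as the footnote in Section~\ref{Sec4} explains, compositions of a \emph{multiplication} operator (of Adams--Riemann--Roch type, which transforms $\L_{\tilde X}$ to a twisted cone) and a genuine element of ${\mathcal P}$. Thus $\Phi$ does not preserve $\L_{\tilde X}$; rather, its multiplication part carries $\L_{\tilde X}$ to $\L_{\Pi\mathfrak{g}/\mathfrak{t}}$ (the $P_i/P_j$ factors) and then to the Eulerian twist of that by $T{\rm Gr}_{n,N}=\sum_{i,j}\Lambda_j/P_i-\sum_{i,j}P_j/P_i$ (the $Y$-dependent factors), while the ${\mathcal P}$-part moves within the cone. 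What you really need is the $W$-invariance of $\Phi$ (which you correctly check) together with the decomposition just described, so that Remark~\ref{rmk4} applies to the $W$-invariant point of the twisted $\L_{\Pi\mathfrak{g}/\mathfrak{t}}$ obtained. With that clarification, your argument goes through.
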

Initially the interest in GW-theory of $\Pi E$ for a bundle $E$ over a compact base is motivated by the fact that in the non-equivariant limit $Y\to 1$, GW-invariants of $\Pi E$, when the limit exist, turn into GW-invariants of the zero locus of a generic section of $E$ (which in the case of $E=TX$ consists of $\chi (X)$ isolated points).

\begin{proof}
We can follow the same route as that of Theorem~\ref{thm2}. The localization
\begin{gather*}
I^T_{(1,\dots,n)}= \sum_{0\leq d_1,\dots,d_n}Q^{d_1+\cdots+d_n} \prod_{i=1}^n\prod_{j=1}^N\prod_{m=1}^{d_i}\frac{1-q^mY\Lambda_i/\Lambda_j} {1-q^m\Lambda_i/\Lambda_j} \\ \phantom{I^T_{(1,\dots,n)}= \sum_{0\leq d_1,\dots,d_n}Q^{d_1+\cdots}}
{}\times \prod_{i,j=1}^n\frac{\prod_{m=-\infty}^{d_i-d_j}(1-q^m\Lambda_i/\Lambda_j)} {\prod_{m=-\infty}^0(1-q^m\Lambda_i/\Lambda_j)}\frac{\prod_{m=-\infty}^0(1-q^mY \Lambda_i/\Lambda_j)} {\prod_{m=-\infty}^{d_i-d_j}(1-q^mY \Lambda_i/\Lambda_j)}
\end{gather*}
of $I^T$ at the fixed point $(1,\dots,n)$ in the grassmannian, together with such localizations at other fixed points, pass the test (i) of the fixed point theory, and the residues at the poles $q=(\Lambda_j/\Lambda_i)^{1/m_0}$ satisfy the recursion relation of the familiar form (ii) with suitable recursion coefficients. This should be obvious after our analysis of the series $J^T$ in Sections~\ref{Sec3} and~\ref{Sec2} respectively. Moreover, according to Remark~\ref{rmk3}, it only suffices to match the values of these recursion coefficients with those in GW-theory of $\Pi T {\rm Gr}_{n,N}$. Using the notation $x=(\Lambda_{n+1}/\Lambda_1)^{1/m_0}$ and our result from Section~\ref{Sec2}, we find the recursion coefficient corresponding to the pole at $q=x$ in~the form
\[
-\frac{Q^{m_0}}{m_0} \frac{\Eu(T_{(1,\dots,n)}X)}{\Eu(T_{\phi}X_{0,2,m_0})}
\]
as before, times the modifying factor
\[
\prod_{j=1}^N\prod_{m=1}^{m_0}(1-x^mY\Lambda_1/\Lambda_j)\ \prod_{i=2}^n\prod_{m=1}^{m_0} \frac{1-x^mY\Lambda_i/\Lambda_{n+1}}{1-x^mY\Lambda_1/\Lambda_i},
\]
where the target $X={\rm Gr}_{n,N}$. Unsurprisingly, the modifying factor is almost reciprocal to the expression for $\Eu(T_{(1,\dots,n)}X)/\Eu(T_{\phi}X_{0,2,m_0})$. They differ by the presence of $Y$ in each factor, and by the extra factor $1-x^{m_0}Y\Lambda_1/\Lambda_{n+1}$ (actually equal to $1-Y$, and excluded from the expression in Section~\ref{Sec2} where $Y=1$). In our computation of $H^0\big(\CC P^1;\phi^*(TX)\big)$, the latter (zero) factor represents the line spanned by the vector field $z^{m_0}\p_{z^{m_0}}$ (infinitesimally rescaling the target~$\CC P^1$), and falls out of $T_{\phi}X_{0,2,m_0}$ because of the infinitesimal automorphism $z\p_z$ of the source $\CC P^1$. Thus, the factor $1-Y$ remains present in $\Eu_{\CC^{\times}}(\ft_*\phi^*(TX))$. Note that $Y$ was introduced as the character of $\CC^{\times}$-action on $T^*X$. The action on $TX$ is given therefore by $Y^{-1}$, but the definition of the K-theoretic Euler class as the exterior algebra of the {\em dual} bundle restores the factors~$Y$ everywhere. Thus, the modifying factor coincides with $\Eu_{\CC^{\times}}(\ft_*\phi^*(TX))/\Eu_{\CC^{\times}}(T_{(1,\dots,n)}X)$, and the recursion coefficient altogether has the required form
\[
-\frac{Q^{m_0}}{m_0} \frac{\Eu(T_{(1,\dots,n)}\Pi TX)}{\Eu (T_{(1,\dots,n)}(\Pi TX)_{0,2,m_0})}.\tag*{\qed}
\]
\renewcommand{\qed}{}
\end{proof}

\begin{Corollary}\label{cor1}
The series $(1-q) I^T /(1-Yq)$ represents a value of the torus-equivariant, per\-mu\-ta\-tion-invariant big J-function in the quantum Hirzebruch K-theory of the grassman\-nian~${\rm Gr}_{n,N}$.
\end{Corollary}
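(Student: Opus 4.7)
The plan is to apply the fixed-point criteria of Section~\ref{Sec2} directly to $(1-q) I^T/(1-Yq)$ as a candidate value of the big J-function of the quantum Hirzebruch K-theory of $X := {\rm Gr}_{n,N}$. In the framework of Remark~\ref{rmk4}, this theory is the twisted K-theoretic GW-theory of $X$ whose virtual structure sheaves on $X_{0,m+1,d}$ are multiplied by the $\CC^\times$-equivariant Euler class $\Eu_{\CC^\times}(\ft_*\ev^* TX)$, with $\CC^\times$ acting on $TX$ by the character~$Y$. Structurally, this is the same moduli-level twist as the one defining the $\Pi TX$-theory of Theorem~\ref{thm4}; the two theories differ only in the Poincar\'e pairing at the input marked point, which for $\Pi TX$-theory is modified by $\Eu_{\CC^\times}(TX)$ and for Hirzebruch K-theory of $X$ remains the standard pairing on $K^0_T(X)$.

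For criterion~(ii), I would recycle the residue computation carried out in the proof of Theorem~\ref{thm4}. There the residue of $(1-q) I^T$ at $q = x = (\Lambda_{n+1}/\Lambda_1)^{1/m_0}$ was identified, at each fixed point, with the $\Pi TX$-recursion coefficient times the localization of $(1-q) I^T$ at the neighbouring fixed point. The Hirzebruch recursion coefficient for the target $X$ differs from the $\Pi TX$ one by the reinstatement at the 0-th marked point of the factor $\Eu_{\CC^\times}(T_{(1,\ldots,n)} X)$ coming from the untwisted Poincar\'e pairing. A direct bookkeeping of this additional factor against the value $1-Yx$ of $1-Yq$ at the pole shows that dividing by $1-Yq$ precisely adjusts the $\Pi TX$ residue to match the predicted Hirzebruch recursion coefficient.

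For criterion~(i), I would observe that the fixed-point localization $(1-q) I^T_{(1,\ldots,n)}$ was shown in the proof of Theorem~\ref{thm4} (via the machinery of Section~\ref{Sec3}) to lie in $\L_{pt}$ over a suitable $\lambda$-algebra. Adjoining $Y$ to the ground ring with $\Psi^k Y = Y^{|k|}$, division by $1-Yq$ is realised as the action of the pseudo-finite-difference operator $\exp\bigl(\sum_{k>0}\Psi^k(Yq(1-q))/k(1-q^k)\bigr)$, exactly of the form shown in Section~\ref{Sec3} to preserve $\L_{pt}$ (its ``symbol'' $D = Yq(1-q)$ reducing to pure $q$-multiplication with no $Q$-shift). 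Since the point target has trivial tangent bundle, the Hirzebruch twist over a point is trivial and $\L_{pt}$ is the correct cone to land in, so criterion~(i) is automatic.

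The main obstacle I foresee is the Poincar\'e-pairing bookkeeping: confirming rigorously that the difference between the Hirzebruch and $\Pi TX$ recursion coefficients is exactly the scalar factor $1-Yx$ at the pole $q = x$. Once this is done, together with the parallel checks at the other fixed points (related by Weyl-group symmetry), Corollary~\ref{cor1} follows from Theorem~\ref{thm4} by the same fixed-point characterization that proved the theorem itself.
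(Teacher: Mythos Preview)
Your overall strategy---verify criteria (i) and (ii) for $(1-q)I^T/(1-Yq)$ by recycling Theorem~\ref{thm4}---is exactly the paper's fixed-point route. But the execution rests on a mis-identification of the Hirzebruch twist, and this propagates into errors in the recursion bookkeeping.

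The quantum Hirzebruch K-theory is \emph{not} the twist by $\Eu_{\CC^\times}(\ft_*\ev^*TX)$; that twist is precisely the $\Pi TX$ theory of Theorem~\ref{thm4}. The Hirzebruch theory twists by $\Eu_{\CC^\times}(TX_{g,m,d})$, the Euler class of the \emph{virtual tangent bundle of the moduli space}. These differ by the extra summands $\ft_*(1-L^{-1})-(\ft_*j_*\O_Z)^*$ in $TX_{g,m,d}$ (see Remark~\ref{rmk5}), which is why the Hirzebruch J-function has a new dilaton shift $(1-q)/(1-Yq)$, a new input $\phi^\alpha\frac{1-qYL_0}{1-qL_0}$ at the distinguished marked point, and a new polarization---none of which is captured by ``same twist, different Poincar\'e pairing.''

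Consequently your residue claim is backwards. In the Hirzebruch theory the recursion coefficient picks up a factor $1-Y$ from the residue of $\frac{1-qYL_0}{1-qL_0}\frac{dq}{q}$ at $q=L_0^{-1}$; but the modifying factor from the twisted structure sheaf now involves $\Eu_{\CC^\times}(T_\phi X_{0,2,m_0})$, which (as argued in the proof of Theorem~\ref{thm4}) \emph{lacks} exactly that $1-Y$ compared to $\Eu_{\CC^\times}(\ft_*\phi^*TX)$. These cancel, and the Hirzebruch recursion coefficient is \emph{identical} to the $\Pi TX$ one. The scalar $1/(1-Yq)$ does not ``adjust'' the recursion at all---a scalar factor never does, by the overruled-cone property. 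Its sole job is to make the $Q^0$ term equal the new dilaton shift, so that the series lands in the right $R_+$-neighborhood. Once you correct the definition of the Hirzebruch twist and redo the bookkeeping along these lines, the argument goes through.
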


Recall that the Hirzebruch $\chi_{-Y}$-genus of a compact complex manifold $M$ is defined by
\[
\chi_{-Y}(M):=\sum_{p=0}^{\dim M} (-Y)^p \chi (M; \Omega^p(M)) = H^*(M; \Eu_{\CC^{\times}}(TM)),
\]
where the rightmost interpretation assumes that $Y\in \CC^{\times}$ acts fiberwise on the tangent bundle by $Y^{-1}$. More generally, one can define the (classical) {\em Hirzebruch K-theory} by replacing the structure sheaves $\O_M$ with $\O_M\otimes \Eu_{\CC^{\times}}(TM)$. The {\em quantum} Hirzebruch K-theory of a target $X$ is defined by similarly modifying the virtual structure sheaves of the moduli spaces $X_{g,m,d}$ using their virtual tangent bundles:
\[
\O_{g,m,d} \mapsto \O_{g,m,d} \otimes \Eu_{\CC^{\times}}(T X_{g,m,d}).
\]
According to a result from~\cite{G-H}, the theory thus obtained can be expressed via the ordinary quantum K-theory, implying in particular Corollary~\ref{cor1} (see Remark~\ref{rmk5} below). However, it~also follows from our fixed point approach. Namely, the big J-function of (permutation-invariant) quantum Hirzebruch K-theory has the form
\[
\frac{1-q}{1-qY}+\t(q)+\sum_{d,m,\alpha}Q^d\phi_{\alpha}\bigg\lan \phi^{\alpha} \frac{1-qYL_0}{1-qL_0},\t(L_1),\dots, \t(L_m)\bigg\ran^{S_m}_{0,m+1,d},
\]
where the correlators are defined using the virtual structure sheaves of the Hirzebruch K-theory. This is not an {\em ad hoc} definition, but is dictated by the general formalism; the dilaton shift and the first input embody respectively: the Euler class (of the universal line bundle $q^{-1}$) corresponding to the genus, and the reciprocal of the equivariant Euler class of $L_0^{-1}$. Consequently, the recursion coefficient of the fixed point theory acquires a new factor $1-Y$: the residue of $\frac{1-qYL_0}{1-qL_0} \frac{{\rm d}q}{q}$ at the pole $q=L_0^{-1}$ (equal in our computations to $(\Lambda_1/\Lambda_{n+1})^{-1/m_0}$). But the above explanation why this factor belongs to $\Eu_{\CC^{\times}}(\ft_*\phi^*(TX))$ means it {\em does not} belong to $\Eu_{\CC^{\times}}(T_{\phi}X_{0,2,m_0})$. The~latter occurs in Lefschetz' fixed point formula for the modified virtual structure sheaf. The net result is that the recursion relation (ii) remains the same as in the theory of $\Pi T X$. Note that a~scalar factor, such as $1/1-qY$ in $(1-q) I^T/(1-qY)$ has no effect on the recursion relation (a~fact indicating that the range $\L\subset \K$ of the big J-function is an ``overruled cone''). The role of this factor is to guarantee that modulo~$Q$, the series equals the dilaton shift, and hence the rest of the series is $Q$-adically small as required.

\begin{Remark}\label{rmk5}
By the way, $1/(1-qY)=\sum_{m\geq 0} Y^mq^m$ is considered a ``Laurent polynomial'' in~$q$, i.e., an element of~$\K_{+}$ in Hirzebruch K-theory, as it doesn't have poles relevant in localization theory. The correlator part of the big J-function in the quantum Hirzebruch K-theory clearly satisfies $\J|_{q=\infty}=Y\J|_{q=0}$, and this condition defines the new space $\K_{-}$. The general result of~\cite{G-H}, which applies to the all-genera permutation-equivariant quantum K-theory, says that the total descendant potential $\D_X^Y$ for the Hirzebruch version of the theory is obtained from the ``ordinary'' one, $\D_X^0$, by three transformations: the Eulerian twisting corresponding to the bundle $E=TX-1$ (in genus $0$, this has practically the same effect as the twisting by $TX$, producing the big J-function of $\Pi T X$), and the above changes in the dilaton shift and polarization \mbox{$\K=\K_{+}\oplus \K_{-}$}. The transformations correspond to the three summands in the virtual tangent bundles:
\[
TX_{g,m,d}=\ft_*\ev^*(TX-1)+\ft_*\big(1-L^{-1}\big)-(\ft_*j_*\O_Z)^*,
\]
where $L$ is the universal cotangent line over $X_{g,m+1,d}$ at the $m+1$-st marked point, and $j\colon Z\to X_{g,m+1,d}$ is the inclusion of the nodal locus. Here $\ft_*\ev^*TX$ represents variations of stable maps from pointed curves with a fixed complex structure, $\ft_*(L^{-1})$ represents variations of the complex structure of the curves, while the last term is supported on the virtual divisor $\ft(Z)\subset X_{g,m,d}$ where the combinatorics of the curves changes, and accounts for the difference between the virtual tangent bundle and the sheaf of vector fields tangent to this divisor.
\end{Remark}

Another form of Theorem~\ref{thm4} can be derived from Serre's duality. The cotangent line bundle~$L$ of a pointed nodal curve and its canonical bundle $K$ are related by $L=K(D)$, where $D:=\sum_{i=1}^m\sigma_i$ is the divisor of the marked points (i.e., away from the nodes, a section of $L$ is a~differential allowed to have 1st order poles at the markings).

Given a bundle $E$ over $X$, on $X_{g,m,d}$ we have
\[
\ft_*\ev^*E=-\big(\ft_* K\ev^*E^{\vee}\big)^{\vee} = -\big(\ft_*\ev^*E^{\vee}\big)^{\vee}+\big(\ft_*(1-L)\ev^*E^{\vee}\big)^{\vee}-\sum_{i=1}^m\ev_i^*E.
\]
Applying the quantum Adams--Riemann--Roch (Theorem 2 in~\cite{Givental:perm11}), we find that tensoring of~$\O_{g,m,d}$ with $\Eu^{-1}_{\CC^{\times}}\big(\big(\ft_*(1-L)\ev^*E^{\vee}\big)^{\vee}\big)$ in the correlators of permutation-equivariant quantum K-theory is equivalent to the change $(1-q) \mapsto (1-q)\Eu_{\CC^{\times}}^{-1}(E)$ in the dilaton shift. The~same change of this inputs: $\t \mapsto \Eu^{-1}_{\CC^{\times}}(E)\t$, is effected by tensoring with the Euler classes of~$-\ev_i^*E$. In other words, the dilaton-shifted total descendant potential of the theory twisted by $\Eu^{-1}\big(\big(\ft_*\ev^*E^{\vee}\big)^{\vee}\big)$ is obtained from the one twisted by $\Eu_{\CC^{\times}}(\ft_*\ev^*E)$ by the transformation $\D_{\Pi E}(\q) \mapsto \D_{\Pi E} \big(\Eu^{-1}_{\CC^{\times}}(E)\q\big)$. The potentials are considered as quantum states in suitable Fock spaces, and the transformation is induced by the map $\f \mapsto \Eu^{-1}_{\CC^{\times}}(E)\f$ between two copies of the loop space~$\K$ (equipped with two different symplectic structures: based on the Poincar\'e pairing $\chi \big(X; \Eu_{\CC^{\times}}^{-1}(E) ab\big)$ on the source space, and $\chi (X;\Eu_{\CC^{\times}}ab)$ on the target. Consequently, the big \mbox{J-functions} in the genus-$0$ theory are related by the inverse transformation: $\J_{\Pi E}\mapsto \Eu_{\CC^{\times}}(E)\J_{\Pi E}$. Applying all this to $E=T{\rm Gr}_{n,N}$, we arrive at the following conclusion.

\begin{Corollary}[{cf.~\cite{Liu}}] \label{cor2}
The series $\Eu_{\CC^{\times}}(T{\rm Gr}_{n,N})(1-q)I^T$ represents a value of the big J-func\-tion in the torus-equivariant, permutation-invariant quantum K-theory of the grassmannian~${\rm Gr}_{n,N}$ twisted by
\[
\O_{0,m,d}\mapsto \O_{0,m,d}\otimes \Eu^{-1}_{\CC^{\times}}\big((\ft_*\ev^*(T^*{\rm Gr}_{n,N}))^{\vee}\big).
\]
\end{Corollary}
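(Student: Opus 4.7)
The plan is to assemble the corollary from Theorem~\ref{thm4} and the Serre-duality manipulation sketched in the paragraph immediately above its statement, taking $E=T{\rm Gr}_{n,N}$ throughout. Theorem~\ref{thm4} asserts that $(1-q)I^T$ lies on $\L_{\Pi TX}$ with $X={\rm Gr}_{n,N}$, that is, on the genus-$0$ cone of the quantum K-theory of $X$ twisted by $\Eu_{\CC^\times}(\ft_*\ev^*E)$. My first step is to rewrite this twisting bundle using the K-theoretic identity
\[
\ft_*\ev^*E = -\big(\ft_*\ev^*E^{\vee}\big)^{\vee}+\big(\ft_*(1-L)\ev^*E^{\vee}\big)^{\vee}-\sum_{i=1}^m\ev_i^*E,
\]
which follows from relative Serre duality $\ft_*F=-(\ft_*K F^{\vee})^{\vee}$ along the universal curve together with the adjunction $L=K(D)$ for $D=\sum_i\sigma_i$ the divisor of marked points.

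Next I would invoke the quantum Adams--Riemann--Roch theorem (Theorem~2 of~\cite{Givental:perm11}) to process the three summands individually. The first summand, after passing from $\Eu_{\CC^\times}$ to $\Eu^{-1}_{\CC^\times}$, is precisely the twisting by $\Eu^{-1}_{\CC^\times}\big((\ft_*\ev^*E^\vee)^\vee\big)$ appearing in the statement. The second summand, being $\ft_*$ applied to $(1-L)\otimes \ev^*E^\vee$, is (by the same quantum Adams--Riemann--Roch computation recalled in~\cite{G-H} and Remark~\ref{rmk5}) equivalent to renormalizing the dilaton shift $(1-q)\mapsto(1-q)\Eu^{-1}_{\CC^\times}(E)$. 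The third summand, supported at the marked points, contributes a rescaling of the inputs $\t\mapsto\Eu^{-1}_{\CC^\times}(E)\t$. Together, the effects of the latter two summands amount to the linear transformation $\f\mapsto\Eu^{-1}_{\CC^\times}(E)\f$ on the loop space $\K$, which is symplectic with respect to the new Poincar\'e pairing $\chi\big(X;\Eu^{-1}_{\CC^\times}(E)ab\big)$ on the source versus $\chi(X;\Eu_{\CC^\times}(E)ab)$ on the target.

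Finally, since this symplectic isomorphism carries $\L_{\Pi E}$ onto the cone of the $\Eu^{-1}_{\CC^\times}((\ft_*\ev^*E^\vee)^\vee)$-twisted theory, values of $\J_{\Pi E}$ are transported to values of the new big J-function by the inverse rescaling $\f\mapsto \Eu_{\CC^\times}(E)\f$. Applying this to the specific element $(1-q)I^T\in\L_{\Pi TX}$ supplied by Theorem~\ref{thm4}, and specializing $E=TX$, $E^{\vee}=T^*X$, produces exactly the statement of Corollary~\ref{cor2}. The main obstacle I anticipate is the bookkeeping required to identify the output of quantum Adams--Riemann--Roch with the precise loop-space transformation $\f\mapsto \Eu^{-1}_{\CC^\times}(E)\f$: one must carefully reconcile how the $\ft_*(1-L)$-twist produces a dilaton-shift renormalization, how the point-supported correction $\sum \ev_i^*E$ reshuffles the inputs $\t(L_i)$, and how both are compatible with the polarization $\K=\K_{+}\oplus \K_{-}$ used in defining the big J-function, as worked out in detail in~\cite{G-H} and summarized in Remark~\ref{rmk5}.
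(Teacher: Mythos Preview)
Your proposal is correct and follows essentially the same route as the paper: start from Theorem~\ref{thm4}, apply the Serre-duality decomposition of $\ft_*\ev^*E$ into the three summands, use quantum Adams--Riemann--Roch to identify the effect of the $(1-L)$- and marked-point summands as the overall rescaling $\f\mapsto \Eu^{-1}_{\CC^\times}(E)\f$ on the loop space, and then invert to conclude $\J_{\Pi E}\mapsto \Eu_{\CC^\times}(E)\J_{\Pi E}$ with $E=T{\rm Gr}_{n,N}$. This is precisely the argument the paper gives in the paragraph preceding the corollary.
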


One more way of modifying virtual structure sheaves, which was recently introduced and explored by Y.~Ruan and M.~Zhang~\cite{R-Z}, consists in tensoring $\O_{g,m,d}$ with a power of the determinant line bundle $(\det (\ft_*\ev^*E))^{-l}$, thereby bringing the {\em level structure} (of level $l$) into the quantum K-theory. Note that in terms of K-theoretic Chern roots $L_1,\dots, L_M$ of a vector bundle $\E$,
\[
\frac{\Eu (\E)}{\Eu (\E^*)}=\frac{\prod_{k=1}^M \big(1-L_k^{-1}\big)}{\prod_{k=1}^M(1-L_k)}
=(-1)^M L_1^{-1}\cdots L_M^{-1} =(-1)^{\dim \E} (\det \E)^{-1}.
\]
So, we take $E=T^*{\rm Gr}_{n,N}$, $\E=\ft_*\ev^*(T^*{\rm Gr}_{n,N})$, and describe the modification of $\O_{g,m,d}$ used in~Corollary~\ref{cor2} as tensoring with both $\Eu^{-1}_{\CC^{\times}}(\E)$ and $(\det \E)^{-1}$.
After the first operation we land in~the theory of the noncompact bundle space $T^*{\rm Gr}_{n,N}$, and after the second in the level~$1$ version of~this theory. The Poincar\'e pairing changes accordingly into $\chi \big(X; \Eu^{-1}_{\CC^{\times}}(T^*{\rm Gr}_{n,N})$ $(\det T^*{\rm Gr}_{n,N})^{-1} ab\big)$. By the Riemann--Roch formula, $\dim \E = (1-g)\dim X+\int_X c_1(T^*X)$, which for $g=0$ yields $(-1)^{\dim \E}=(-1)^{\dim {\rm Gr}_{n,N}}(-1)^{Nd}$. The first sign is absorbed by the ratio of the Euler classes (of~$T$ and~$T^*$) in the Poincar\'e pairings, and the second by the change $Q\mapsto (-1)^N Q$, leading to the following conclusion.

\begin{Corollary}\label{cor3}
The series
\[
\det (T^*{\rm Gr}_{n,N})^{-1}\Eu_{\CC^{\times}}(T^*{\rm Gr}_{n,N})\cdot (1-q) I^T\big((-1)^N Q\big)
\]
represents a value of the big J-function in the level~$1$, torus-equivariant, permutation-invariant quantum K-theory of the cotangent bundle space $T^*{\rm Gr}_{n,N}$.
\end{Corollary}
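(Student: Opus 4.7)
The plan is to deduce Corollary~\ref{cor3} from Corollary~\ref{cor2} by an algebraic rearrangement of the twisting factor, using the K-theoretic identity
\[
\Eu(\E)/\Eu(\E^*) = (-1)^{\dim \E}(\det \E)^{-1}
\]
displayed in the excerpt, applied to the virtual vector bundle $\E := \ft_*\ev^*\big(T^*{\rm Gr}_{n,N}\big)$ on the moduli spaces of stable maps.

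Concretely, the identity rewrites the Corollary~\ref{cor2} twist as
\[
\Eu^{-1}_{\CC^{\times}}\big(\E^{\vee}\big) = (-1)^{\dim \E}\,(\det \E)^{-1}\,\Eu^{-1}_{\CC^{\times}}(\E),
\]
decomposing it into three pieces with clear geometric meaning. The factor $\Eu^{-1}_{\CC^{\times}}(\E)$ is the standard modification that passes from the $T^N$-equivariant quantum K-theory of the compact base ${\rm Gr}_{n,N}$ to the $(T^N\times\CC^{\times})$-equivariant quantum K-theory of the noncompact cotangent total space $T^*{\rm Gr}_{n,N}$ (with $Y$ the $\CC^{\times}$-character acting on the fibers). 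The factor $(\det \E)^{-1}$ is, by the Ruan--Zhang definition recalled just before the corollary, exactly the level-$1$ twisting. The factor $(-1)^{\dim \E}$ is a scalar sign.

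It then remains to absorb this sign. A Riemann--Roch computation in genus~$0$ for a degree-$d$ stable map gives $\dim \E = \dim {\rm Gr}_{n,N} - N d$, so $(-1)^{\dim \E} = (-1)^{\dim {\rm Gr}_{n,N}}(-1)^{N d}$. The degree-dependent sign $(-1)^{N d}$ is absorbed into the Novikov variable via the substitution $Q \mapsto (-1)^N Q$, producing the argument of $I^T$ in the statement. The constant sign $(-1)^{\dim {\rm Gr}_{n,N}}$ is absorbed by the change of Poincar\'e pairing---from $\chi\big({\rm Gr}_{n,N};\Eu_{\CC^{\times}}(T{\rm Gr}_{n,N})\, ab\big)$ in the Corollary~\ref{cor2} theory to $\chi\big({\rm Gr}_{n,N};\Eu^{-1}_{\CC^{\times}}(T^*{\rm Gr}_{n,N})(\det T^*{\rm Gr}_{n,N})^{-1} ab\big)$ in the level-$1$ theory of $T^*{\rm Gr}_{n,N}$---whose ratio involves precisely this sign via the same Euler class identity. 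This also accounts for the replacement of the Corollary~\ref{cor2} prefactor $\Eu_{\CC^{\times}}(T{\rm Gr}_{n,N})$ by $\det(T^*{\rm Gr}_{n,N})^{-1}\Eu_{\CC^{\times}}(T^*{\rm Gr}_{n,N})$.

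The main obstacle I anticipate is the symplectic bookkeeping: verifying that this prefactor rescaling genuinely intertwines the dilaton shift and the polarization $\K = \K_{+}\oplus \K_{-}$ of the twisted theory of Corollary~\ref{cor2} with those of the level-$1$ quantum K-theory of $T^*{\rm Gr}_{n,N}$, so that a value of the first big J-function transports to a bona fide value of the second (rather than merely to a scalar multiple of one). Once this compatibility is set up, the corollary follows from Corollary~\ref{cor2} by direct substitution.
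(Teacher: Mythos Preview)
Your proposal is correct and follows essentially the same route as the paper: the paper also derives Corollary~\ref{cor3} from Corollary~\ref{cor2} by applying the identity $\Eu(\E)/\Eu(\E^{\vee})=(-1)^{\dim\E}(\det\E)^{-1}$ with $\E=\ft_*\ev^*(T^*{\rm Gr}_{n,N})$, interpreting $\Eu^{-1}_{\CC^{\times}}(\E)$ as the passage to the bundle space $T^*{\rm Gr}_{n,N}$ and $(\det\E)^{-1}$ as the level-$1$ twisting, then using Riemann--Roch to split $(-1)^{\dim\E}=(-1)^{\dim{\rm Gr}_{n,N}}(-1)^{Nd}$ and absorbing the two signs respectively into the Poincar\'e pairing and the substitution $Q\mapsto(-1)^NQ$. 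The ``symplectic bookkeeping'' you flag is not spelled out in the paper either; it is treated as part of the standard formalism.
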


Explicitly, the product of the determinant and the Eulerian pre-factor differs by the sign $(-1)^{\dim {\rm Gr}_{n,N}}$ from
\[
\Eu_{\CC^{\times}}(T{\rm Gr}_{n,N})=\frac{\prod_{j=1}^N\prod_{i=1}^n(1-YP_i/\Lambda_j)}{\prod_{i,j=1}^n(1-Y P_i/P_j)}.
\]
Because of this pre-factor, the series even modulo $Q$ is not equal the dilaton shift $1-q$ (as well as in Corollary~\ref{cor2}), which already disqualifies it for the role of the ``small'' J-function.

In fact the $q$-hypergeometric series which arises in the K-theoretic computations on the spaces of quasimaps to the grassmannian is slightly different from the one in Corollaries~\ref{cor2} and~\ref{cor3}. It~has the form
\begin{gather*}
\tilde{I}^T = \sum_{0\leq d_1,\dots,d_n}Q^{d_1+\cdots+d_n} \prod_{i=1}^n\prod_{j=1}^N\prod_{m=0}^{d_i-1}\frac{1-q^mYP_i/\Lambda_j} {1-q^mP_i/\Lambda_j}
\\ \phantom{\tilde{I}^T =\sum_{0\leq d_1,\dots,d_n}Q^{d_1+\cdots}}
{}\times \prod_{i,j=1}^n\frac{\prod_{m=-\infty}^{d_i-d_j}(1-q^mP_i/P_j)}{\prod_{m=-\infty}^0(1-q^mP_i/P_j)}\frac{\prod_{m=-\infty}^{-1}(1-q^mY P_i/P_j)}{\prod_{m=-\infty}^{d_i-d_j-1}(1-q^mY P_i/P_j)},
\end{gather*}
which differs from $I^T$ in that in the products of the factors $1-q^mYP_i/\Lambda_j$, the range of $m$ is not from $1$ to $d_i$ (as for the factors without $Y$) but from $0$ to $d_i-1$ (and similarly for the factors $1-q^mYP_i/P_j$). We claim, however, that $(1-q) \tilde{I}$ and $(1-q) (-1)^{\dim {\rm Gr}_{n,N}} \tilde{I}^T\big((-1)^NQ\big)$ represent some values of the big J-functions of the same theories as described in Corollary~\ref{cor2} and~\ref{cor3} respectively.

Namely, consider the version of $\tilde{I}^T$ with $Q^{d_1+\cdots+d_n}$ is replaced with $Q_1^{d_1}\cdots Q_n^{d_n}$, and apply to it the operator
\[
\frac{\prod_{j=1}^N\prod_{i=1}^n\big(1-Y q^{Q_i\p_{Q_i}}P_i/\Lambda_j\big)}{\prod_{i,j=1}^n\big(1-Y q^{Q_i\p_{Q_i}-Q_j\p_{Q_j}}P_i/P_j\big)}.
\]
This results in restoring the ``missing'' factors with $m=d_i$ or $m=d_i-d_j$, and in the limit $Q_1=\cdots=Q_n=Q$ yields the series of Corollary~\ref{cor3} (modulo to the sign $(-1)^{\dim_{{\rm Gr}_{n,N}}}$ and the change $Q\mapsto (-1)^NQ$). On the other hand, the operator can be written as
\[
{\rm e}^{-\sum_{k>0} Y^k \left[\sum_{j=1}^N\sum_{i=1}^n q^{kQ_i\p_{Q_i}}P^k_i/\Lambda^k_j - \sum_{i,j=1}^n q^{kQ_i\p_{Q_i}-kQ_j\p_{Q_j}} P^k_i/P^k_j \right]/k},
\]
and hence belongs to the group ${\mathcal P}^W$, which justifies our claim due to Theorem~\ref{thm3} and Remark~\ref{rmk4}.

The series $(1-q)\tilde{I}^T$ appears to have better chances to pose for the ``small'' J-function, because the term with $Q^0$ is $1-q$, and other terms are {\em reduced} rational functions of~$q$. And indeed, H.~Liu~\cite{Liu}, looking for a stable-map K-theory interpretation of the $q$-hypergeometric series arising in the quasimap K-theory of quiver varieties, shows that in the case $n=1$ of projective spaces, the series $(1-q)\tilde{I}^T$ is the small J-function in the theory described by Corollary~\ref{cor2}. However, he falls short of sticking to this interpretation, because he finds an example (namely the manifold of flags in $\CC^3$) where the similarly twisted small J-function is unbalanced.

In fact none of $(1-q) \tilde{I}^T$ with $n>1$ (and none of other I-series featuring in this section) represent ``small'' J-functions, and not because some rational functions are not reduced, but for much more dramatic reasons. Namely, in our fixed point characterization of the big J-function, the poles participating in the recursion relations come from the characters of the torus action on the grassmannian {\em per se}: $q=(\Lambda_i/\Lambda_j)^{-1/m}$. The terms of a balanced I-series containing the factors $1/(1-q^mYP_i/P_j)$ lead to the poles at $q=(Y\Lambda_i/\Lambda_j)^{-1/m}$, which cannot come from fixed point localization. Such fractions should therefore be interpreted as elements of $\K_{+}$, i.e., as geometric series $\sum_{k\geq 0}q^{mk}(YP_i/P_j)^k$ converging in the $Y$-adic topology. Thus, representing $(1-q) \tilde{I}^T$ as $(1-q)+\t(q) \ \mod \K_{-}$ results in a very complicated value of $\t(q)$, meaning that the series represents the value of the big J-function with the inputs $\t(L_i)$ which are rather far from~$0$. What makes the effect even more dramatic is that it is the input in the {\em permutation-invariant} quantum K-theory, no counterpart of which has been discussed so far in the context of quasimap spaces.

Apart from this, the interpretation of the series given in Corollary~\ref{cor3} is quite parallel to~its definition~\cite{KPSZ, Ok, PSZ} in the quasimap theory as a generating function capturing some K-theoretic GW-invariants of the cotangent bundle of the grassmannian based on the virtual structure sheaves ``symmetrized'' by the determinant factors.

Finally, we would like to stress that, although we have formulated Theorem~\ref{thm4} and its corollaries as statements about the particular I-function, modified slightly in one way or another, in fact these modifications affect the recursion coefficients in a simple and controllable way, implying that the whole big J-functions of the respective theories coincide up to these minor modifications. In particular, Corollary~\ref{cor3} is connected to Theorem~\ref{thm4} by a general phenomenon called the ``non-linear (or quantum) Serre duality''~\cite{Coates-Givental}. It relates GW-invariants of the super-space $\Pi E$ and bundle space $E^{\vee}$, and was first observed in~\cite{Givental:EGWI} (for cohomological GW-invariants) via fixed point localization. For the full treatment (including higher genus) of the K-theoretic reincarnation of the quantum Serre duality we refer to~\cite{Yan}.

\section{Non-abelian quantum Lefschetz}\label{Sec6}

A somewhat different proof of Theorem~\ref{thm4} could be derived from Theorem~\ref{thm3} together with Remark~\ref{rmk4}, applied to GW-invariants of the grassmannian Euler-twisted by the tangent bundle
\begin{gather*}
E=\sum_{j=1}^N\sum_{i=1}^n\Lambda_j/P_i-\sum_{i,j=1}^nP_j/P_i.
\end{gather*}
Here we illustrate this approach using as an example Eulerian twistings applied to the {\em dual} tautological bundle $E=P_1^{-1}+\cdots+P_n^{-1}$.

The Euler-twisted theory (of both $\Pi \mathfrak{g}/\mathfrak{t}$ and ${\rm Gr}_{n,N}$) is defined by
\[
\O_{g,m,d}\mapsto \O_{g,m,d}\otimes \Eu_{\CC^{\times}}(\ft_*\ev^*E) = \O_{g,m,d}\otimes {\rm e}^{-\sum_{k>0} Y^k \Psi^{-k}(\ft_*\ev^* E)/k},
\]
where $Y\in \CC^{\times}$ acts by multiplication on the fibers of $E$. According to the quantum Adams--Riemann--Roch theorem~\cite{Givental:perm11}, the twisted theory is obtained from the untwisted one by the multiplication: $\L_{\Pi E}=\square^{-1} \L_{{\rm Gr}_{n,N}}$, where
\[
\square:= {\rm e}^{ \sum_{k>0} Y^k\Psi^{-k}(E) q^k/k(1-q^k)}
= \Eu_{\CC^{\times}}(E) \sum_{d_1,\dots,d_n\geq 0}\frac{Y^{d_1+\cdots+d_n}P_1^{d_1}\cdots P_n^{d_n}}{\prod_{i=1}^n\prod_{m=1}^{d_i}(1-q^mP_i)}.
\]

This is a convenient moment to address one general technical issue. Values of big J-functions are supposed to lie in $R_{+}$-neighborhood of the dilaton shift $1-q$. The terms containing Novikov's variables are $R_{+}$-small, and remain such after multiplication by anything like $\square$. Moreover, for Laurent polynomials $\t(q)$ with $R_{+}$-small coefficients, $\square\ \t$ contains only finitely many non-reduced terms, and so modulo $\K_{-}$ it remains a Laurent polynomial (with $R_{+}$-small coefficients). However, the product $\square (1-q) \equiv (1-q) + Y E^{\vee} \mod \K_{-}$ seems to present a problem. One way to resolve it is to postulate that $R_{+}\ni Y$. Here is a better way to deal with this issue, which is especially useful if one also needs to use $Y^{-1}$ in the same context. Consider the operator
\[
D={\rm e}^{ \sum_{k>0} Y^k q^k \sum_{i=1}^n P_i^k(1-q^{k Q_i\p_{Q_i}})/k(1-q^k)},
\]
which is $\square$ times the pseudo-finite-difference operator from the group ${\mathcal P}^W$ corresponding to the finite-difference operator $-qY\sum_{i=1}^n P_i q^{Q_i\p_{Q_i}}$. Therefore
\[
\square^{-1} \L_{\Pi \mathfrak{g}/\mathfrak{t}} = D^{-1}\L_{\Pi \mathfrak{g}/\mathfrak{t}},
\]
which by Theorem~\ref{thm3} (or rather its generalization explained in Remark~\ref{rmk4}) turns into $\L_{\Pi E}$ in~the limit $Q_1=\cdots=Q_n=Q$, $\Lambda_0=1$. The advantage of using $D$ instead of $\square$ is that $D$ does not change terms constant in $Q_1,\dots,Q_n$: $D(1-q)=1-q$.

\begin{Theorem}[{non-abelian quantum Lefschetz}]\label{thm5}
Suppose
\[
\sum_{d_1,\dots,d_n\geq 0} I_{d_1,\dots,d_n}(P_1,\dots,P_n,\Lambda_0) Q_1^{d_1}\cdots Q_n^{d_n}
\]
is a $W$-invariant point in $\L_{\Pi \mathfrak{g}/\mathfrak{t}}$, then
\[
\sum_{d_1,\dots,d_n\geq 0}I_{d_1,\dots,d_n}|_{\Lambda_0=1} Q^{d_1+\cdots+d_n}\prod_{i=1}^n\prod_{m=1}^d(1-q^mYP_i)
\]
is a point in $\L_{\Pi E}$.
\end{Theorem}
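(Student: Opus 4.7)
The plan is to exploit the factorization $D=\square\cdot P$ (implicit in the preceding paragraph) where $\square$ is the scalar multiplication operator from the quantum Adams--Riemann--Roch formula relating $\L_{{\rm Gr}_{n,N}}$ to $\L_{\Pi E}$, and $P\in \mathcal{P}^W$ is the pseudo-finite-difference operator associated to the symbol $-qY\sum_i P_i q^{Q_i\p_{Q_i}}$. Since $P\in\mathcal{P}^W$ preserves $\L_{\Pi \mathfrak{g}/\mathfrak{t}}$ together with $W$-invariance, one has $D^{-1}\L_{\Pi\mathfrak{g}/\mathfrak{t}}=\square^{-1}\L_{\Pi\mathfrak{g}/\mathfrak{t}}$; by the quantum ARR theorem applied to the toric super-space $\tilde{X}$, the right-hand side is the cone of the toric theory of $\tilde{X}$ twisted simultaneously by the adjoint super-bundle and by the Eulerian twist corresponding to $E$. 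Thus, if $I\in\L_{\Pi\mathfrak{g}/\mathfrak{t}}$ is $W$-invariant, then $D^{-1}I$ is a $W$-invariant point in this doubly twisted toric cone, to which the $E$-twisted version of Theorem~\ref{thm3} (Remark~\ref{rmk4}) applies: specializing $Q_1=\cdots=Q_n=Q$ and $\Lambda_0=1$ produces a point of $\L_{\Pi E}$.

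The only explicit calculation needed is the action of $D^{-1}$ on a monomial $Q_1^{d_1}\cdots Q_n^{d_n}$. Since $q^{kQ_i\p_{Q_i}}$ multiplies such a monomial by $q^{kd_i}$, the exponent of $D$ applied to $Q_1^{d_1}\cdots Q_n^{d_n}$ equals
\[
\sum_{i=1}^n\sum_{k>0}\frac{Y^k q^k P_i^k(1-q^{kd_i})}{k(1-q^k)}=\sum_{i=1}^n\sum_{m=1}^{d_i}\sum_{k>0}\frac{(YP_iq^m)^k}{k}=-\sum_{i=1}^n\sum_{m=1}^{d_i}\ln(1-Yq^mP_i),
\]
using the geometric-series identity $q^k(1-q^{kd_i})/(1-q^k)=\sum_{m=1}^{d_i}q^{mk}$. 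Exponentiating and inverting gives
\[
D^{-1}\left(Q_1^{d_1}\cdots Q_n^{d_n}\right)=Q_1^{d_1}\cdots Q_n^{d_n}\prod_{i=1}^n\prod_{m=1}^{d_i}(1-Yq^mP_i).
\]
Applying this termwise to $I=\sum I_{d_1,\dots,d_n}(P,\Lambda_0)Q_1^{d_1}\cdots Q_n^{d_n}$ and then specializing $Q_i\mapsto Q$, $\Lambda_0\mapsto 1$ reproduces the series displayed in the statement of Theorem~\ref{thm5}.

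The main conceptual point (rather than a computational obstacle) is checking that the hypotheses of the generalized Theorem~\ref{thm3} from Remark~\ref{rmk4} are genuinely applicable here, i.e., that the doubly twisted toric cone $\square^{-1}\L_{\Pi\mathfrak{g}/\mathfrak{t}}$ corresponds to a $T^N$-equivariant theory with isolated fixed points and one-dimensional orbits whose recursion coefficients match those of $\L_{\Pi E}$ on the grassmannian side. The virtue of using $D$ in place of $\square$ is exactly what was noted before the theorem: $D$ is normalized so that $D^{-1}(1-q)=1-q$, hence $D^{-1}I$ remains in the required $R_+$-neighborhood of the dilaton shift regardless of whether $Y$ is treated as small. $W$-invariance is preserved at every step because $D$ (like $\square$ and $P$) is symmetric in the pairs $(P_i,Q_i)$. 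With these points in place, the equality $D^{-1}I\big|_{Q_i=Q,\,\Lambda_0=1}\in\L_{\Pi E}$ follows, and this is precisely the series in the theorem.
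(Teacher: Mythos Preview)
Your proof is correct and follows exactly the paper's approach: the paper's proof is the terse one-liner ``Apply $D^{-1}$ and use'' followed by the identical computation of $D^{-1}Q_i^{d_i}=Q_i^{d_i}\prod_{m=1}^{d_i}(1-q^mYP_i)$, with all of the conceptual setup you spell out (the factorization $D=\square\cdot P$, the equality $\square^{-1}\L_{\Pi\mathfrak{g}/\mathfrak{t}}=D^{-1}\L_{\Pi\mathfrak{g}/\mathfrak{t}}$, and the invocation of Remark~\ref{rmk4}) already established in the paragraphs immediately preceding the theorem statement. Your expanded account simply makes explicit what the paper leaves to the preceding discussion.
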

\begin{proof}Apply $D^{-1}$ and use
\begin{gather*}
{\rm e}^{-\sum_{k>0} q^kY^kP_i^k(1-q^{Q_i\p_{Q_i}})/k(1-q^k)} Q_i^{d_i} = {\rm e}^{-\sum_{k>0}q^kY^kP_i^k(1-q^{d_ik})/k(1-q^k)} Q_i^{d_i}
\\ \hphantom{{\rm e}^{-\sum_{k>0} q^kY^kP_i^k(1-q^{Q_i\p_{Q_i}})/k(1-q^k)} Q_i^{d_i}}
{}= {\rm e}^{ -\sum_{k>0} \sum_{m=1}^{d_i} q^{mk}Y^kP^k/k} Q_i^{d_i}\! = \! Q^d\!\!\prod_{m=1}^{d_i}(1-q^mYP_i). \!\!\! \!\!\tag*{\qed}
\end{gather*}
\renewcommand{\qed}{}
\end{proof}

\begin{Corollary}\label{cor4}
The small J-function of $\Pi E$ equals $(1-q)I$, where
\[
I=\sum_{d_1,\dots,d_n\geq 0}\frac{Q^{\sum d_i}\prod_{i=1}^n\prod_{m=1}^{d_i}(1-q^mYP_i)}{\prod_{j=1}^N\prod_{i=1}^n
\prod_{m=1}^{d_i}(1-q^mP_i/\Lambda_j)}\prod_{i,j=1}^n
\frac{\prod_{m=-\infty}^{d_i-d_j}(1-q^mP_i/P_j)}{\prod_{m=-\infty}^0(1-q^mP_i/P_j)}.
\]
\end{Corollary}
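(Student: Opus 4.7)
The approach is to derive the Corollary directly from Theorem~\ref{thm5} applied to the series $J^T_{\Pi\mathfrak{g}/\mathfrak{t}}$ displayed at the start of Section~\ref{Sec4}. That series is a $W$-invariant element of $\L_{\Pi\mathfrak{g}/\mathfrak{t}}$, since $S_n$ acts by simultaneously permuting the pairs $(P_i, Q_i)$ and every factor in the formula is preserved under this action. So the hypothesis of Theorem~\ref{thm5} is met with $I_{d_1,\dots,d_n}$ equal to the coefficient of $Q_1^{d_1}\cdots Q_n^{d_n}$ in $J^T_{\Pi\mathfrak{g}/\mathfrak{t}}$.

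First I would check that applying Theorem~\ref{thm5} to this series produces precisely $(1-q)I$. Setting $\Lambda_0 = 1$ makes the $(i,j)=(i,i)$ factor in $\prod_{i\neq j}$ trivial, so that product can be extended to $\prod_{i,j=1}^n$; multiplying by the twist factor $\prod_i\prod_{m=1}^{d_i}(1-q^m Y P_i)$ and collapsing $Q_i=Q$ then reproduces the series $I$ from the statement. Theorem~\ref{thm5} therefore yields $(1-q)I \in \L_{\Pi E}$.

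The remaining task is to identify $(1-q)I$ with the value $\J_{\Pi E}(0)$ of the big J-function at zero input. By the shape $\J(\t) = (1-q) + \t(q) + (\text{element of }\K_-)$, any element of $\L_{\Pi E}$ of the form $(1-q) + \K_-$ is forced to equal $\J_{\Pi E}(0)$. The $Q^0$-coefficient of $(1-q)I$ is manifestly $1-q$. For $D := \sum d_i \geq 1$, I would verify the $Q^D$-coefficient lies in $\K_-$ by adapting Remarks~\ref{rmk1} and~\ref{rmk2}: rearranging the symmetric factors as in Remark~\ref{rmk1} cancels the nilpotent $P_j - P_i$ denominators across tuples with fixed $\sum d_i$ and introduces no $q$-poles; no factor produces a pole at $q=0$; and, accounting for the new numerator $\prod_i\prod_{m=1}^{d_i}(1-q^m Y P_i)$ of $q$-degree $\sum_i \binom{d_i+1}{2}$, the excess of denominator $q$-degree over numerator $q$-degree is
\[
(N-1)\sum_i \binom{d_i+1}{2} - \sum_{d_i > d_j}\binom{d_i-d_j+1}{2} \;\geq\; (N-n)\sum_i\binom{d_i+1}{2} \;\geq\; N-n \;\geq\; 1,
\]
using $\binom{d_i-d_j+1}{2} \leq \binom{d_i+1}{2}$ together with the fact that at most $n-1$ indices $j$ satisfy $d_j < d_i$. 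Thus the coefficient vanishes at $q = \infty$ and lies in $\K_-$.

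The main (essentially bookkeeping) obstacle is making sure the specialization $\Lambda_0 = 1$ interacts correctly with the range of $(i,j)$ in the symmetric product and that the twist factor from Theorem~\ref{thm5} assembles into precisely the numerator $\prod_i\prod_{m=1}^{d_i}(1-q^m Y P_i)$; once this matching and the above degree estimate are in place, the Corollary follows from the characterization of the small J-function as the unique element of $\L_{\Pi E}$ congruent to $1-q$ modulo $\K_-$.
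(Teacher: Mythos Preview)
Your approach is exactly the one the paper intends (and leaves implicit): feed the $W$-invariant point $(1-q)J^T_{\Pi\mathfrak{g}/\mathfrak{t}}$ into Theorem~\ref{thm5} to land in $\L_{\Pi E}$, then invoke a degree estimate in the style of Remark~\ref{rmk2} to conclude the input is $\t=0$. One minor slip: it is $(1-q)J^T_{\Pi\mathfrak{g}/\mathfrak{t}}$, not $J^T_{\Pi\mathfrak{g}/\mathfrak{t}}$ itself, that lies on $\L_{\Pi\mathfrak{g}/\mathfrak{t}}$; with that adjustment Theorem~\ref{thm5} outputs $(1-q)I$ as you claim.

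There is, however, an off-by-one error in the degree count. You need the $Q^D$-coefficient of $(1-q)I$, not of $I$, to lie in $\K_-$; the extra factor $(1-q)$ raises the numerator $q$-degree by one, so the excess you compute for $I$ must be at least~$2$ (precisely as in Remark~\ref{rmk2}, where the bound is $\geq N-n+1\geq 2$), not merely $\geq 1$. Your estimate $\geq N-n$ therefore suffices only when $n\leq N-2$. In the borderline case $n=N-1$ the argument genuinely fails: for ${\rm Gr}_{1,2}$ the $Q^1$-coefficient of $(1-q)I$ is
\[
\frac{(1-q)(1-qYP)}{(1-qP/\Lambda_1)(1-qP/\Lambda_2)},
\]
whose value at $q=\infty$ is $Y\Lambda_1\Lambda_2 P^{-1}\neq 0$, so it is not in $\K_-$. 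Thus for $n=N-1$ the series $(1-q)I$ is still a point of $\L_{\Pi E}$ by Theorem~\ref{thm5}, but at a nonzero input; the claim that it is the \emph{small} J-function is only justified for $n\leq N-2$ (compare the explicit range of levels derived by the analogous degree count in the proof of Corollary~\ref{cor5}).
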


\section{Level structures and dual grassmannians}
\label{Sec7}

Of course, the approach illustrated by Theorem~\ref{thm5} applies to any bundle $E$ over the grassmannian, since $E$ can always be written as a symmetric combination of monomials $\prod_i P_i^{l_i}$. We are going to use this together with the observation (see Section~\ref{Sec5}) that $\det^{-1}(-\E) =\Eu (\E)/\Eu (\E^{\vee})$ in order to describe the effect of the level structure on the genus-0 quantum K-theory of the grassmannian. For the sake of illustration, we take $E$ to be the tautological bundle $V=P_1+\cdots+P_n$. With $\E:=\ft_*\ev^*V$, we have level-$l$ twisted structure sheaves
\[
\O_{g,m,d}\otimes \det^{-l}(-\E) = \O_{g,m,d}\otimes \frac{\Eu(l \E)}{\Eu(l \E^{\vee})} = \O_{g,m,d}\otimes {\rm e}^{-l \sum_{k\neq 0} \Psi^k(\E)/k}.
\]
The Adams--Riemann--Roch theorem from~\cite{Givental:perm11} yields the multiplication operator
\[
\square = {\rm e}^{ -l \sum_{k\neq 0}\Psi^k(V)/k(1-q^k)},
\]
and the respective pseudo-finite-difference operator
\[
D={\rm e}^{ -l \sum_{k\neq 0}\sum_{i=1}^n P^k_i(1-q^{kQ_i\p_{Q_i}})/k(1-q^k)}.
\]
Applying it to $Q_i^{d_i}$, we find
\begin{align*}
D Q_i^{d_i} &= Q_i^{d_i}{\rm e}^{ -l\sum_{k>0}\sum_{m=0}^{d_i-1}(P_i^kq^{mk}-P_i^{-k}q^{-mk})/k}\\
 & = Q_i^{d_i}\prod_{m=0}^{d_i-1}\left(\frac{1-P_iq^m}{1-P_i^{-1}q^{-m}}\right)^l = Q_i^{d_i}(-P_i)^{ld_i}q^{l\binom{d_i}{2}}.
\end{align*}
The above calculation is somewhat formal. The initial determinantal twisting of $\O_{g,m,d}$ and the finial modifying factors are well-defined, but in order to justify intermediate steps, one needs add to $R_{+}$ two variables $Y$, $Y'$, and replace $\E$ and $\E^{\vee}$ with $Y\E$ and $Y'\E^{\vee}$. This will lead to the product of fractions $(1-YP_iq^m)/(1-Y'P_i^{-1}q^{-m})$, where one can pass to the limit $Y=Y'=1$, thus obtaining the following result.

\begin{Theorem}[{cf.~\cite{R-Z}}]\label{thm6}
Suppose
\[
\sum_{d_1,\dots,d_n\geq 0}I_{d_1,\dots,d_n}(P_1,\dots,P_n,\Lambda_0) Q_1^{d_1}\cdots Q_n^{d_n}
\]
is a $W$-invariant point in $\L_{\Pi \mathfrak{g}/\mathfrak{t}}$. Then
\[
\sum_{d_1,\dots,d_n\geq 0}I_{d_1,\dots,d_n}|_{\Lambda_0=1}Q^{d_1+\cdots+d_n} \prod_{i=1}^n \Big( P_i^{d_i}q_i^{\binom{d_i}{2}} \Big)^l
\]
is a point in $\L_{{\rm Gr}_{n,N}}^{(V,l)}$.
\end{Theorem}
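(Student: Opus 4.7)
The proof I envisage mirrors that of Theorem 5 step by step, substituting the level-$l$ determinantal twist for the Eulerian twist by the dual tautological bundle. First, I would combine the two rewritings displayed in the excerpt to put the level-$l$ modifying sheaf into exponential form
\[
\O_{g,m,d}\otimes \det^{-l}(-\E) = \O_{g,m,d}\otimes \frac{\Eu(l\E)}{\Eu(l\E^{\vee})} = \O_{g,m,d}\otimes {\rm e}^{-l\sum_{k\neq 0}\Psi^k(\E)/k},
\]
so that the quantum Adams--Riemann--Roch theorem of \cite{Givental:perm11} applies uniformly to both $\Pi\mathfrak{g}/\mathfrak{t}$ and ${\rm Gr}_{n,N}$, yielding the multiplication operator $\square={\rm e}^{-l\sum_{k\neq 0}\Psi^k(V)/k(1-q^k)}$ that intertwines the two untwisted cones with their level-$l$ counterparts.

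Since $\square$ modifies the dilaton shift and so does not itself belong to $\mathcal{P}^W$, I would correct it by the same device used in Theorem 5: write $\square$ as the composition of a $Q$-independent factor (which acts trivially on the constant-in-$Q$ part of any element of $\K$) and the pseudo-finite-difference operator $D$ displayed in the excerpt, which is manifestly $W$-invariant and satisfies $D(1-q)=1-q$. The invariance result of Theorem 3, extended by Remark 4 to twisted theories, then guarantees that $D$ sends each $W$-invariant point of $\L_{\Pi\mathfrak{g}/\mathfrak{t}}$, after specialization $Q_1=\cdots=Q_n=Q$ and $\Lambda_0=1$, to a point of $\L^{(V,l)}_{{\rm Gr}_{n,N}}$. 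The direct calculation of $DQ_i^{d_i}$ already sketched in the excerpt collapses the doubly-infinite exponential via the telescoping identity $(1-P_iq^m)/(1-P_i^{-1}q^{-m})=-q^mP_i$ into $\prod_{m=0}^{d_i-1}(-q^mP_i)^l=((-P_i)^{d_i}q^{\binom{d_i}{2}})^l$, matching the modifying factor in the statement (the overall sign $(-1)^{l\sum d_i}$ is absorbed by rescaling the Novikov variable $Q\mapsto(-1)^lQ$).

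The principal obstacle is purely formal: the exponent $-l\sum_{k\neq 0}\Psi^k(V)/k(1-q^k)$ involves Adams operations $\Psi^k$ with $k<0$, so $\square$, its factorization, and $D$ itself are a priori only symbolic. I would rigorize exactly as hinted in the excerpt, adjoining two auxiliary variables $Y,Y'\in R_{+}$ and replacing $\E$ and $\E^{\vee}$ by $Y\E$ and $Y'\E^{\vee}$ throughout the Adams--Riemann--Roch computation. All intermediate steps are then $R_+$-adically convergent, the action of $D$ produces the well-defined ratios $(1-YP_iq^m)^l/(1-Y'P_i^{-1}q^{-m})^l$ at each level $m$, and the limit $Y=Y'=1$ exists monomial by monomial because the telescoping into $(-q^m P_i)^l$ is uniform in $Y,Y'$; the surviving cancellations reproduce the claimed factor and complete the proof.
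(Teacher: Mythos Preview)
Your proposal is correct and follows essentially the same route as the paper: express the determinantal twist in Adams-operation form, invoke quantum Adams--Riemann--Roch to obtain $\square$, replace $\square$ by the pseudo-finite-difference operator $D\in\mathcal{P}^W$ (so that the dilaton shift is preserved), compute $DQ_i^{d_i}$ via the telescoping identity, and rigorize the bilateral sum using auxiliary variables $Y,Y'$ before passing to the limit $Y=Y'=1$. The paper adds one clarifying remark you omit, namely that beyond the sign $(-1)^{l\sum d_i}$ absorbed by $Q\mapsto(-1)^lQ$ there is a further constant sign $(-1)^{l\dim V}$ accounted for by the discrepancy in Poincar\'e pairings between the twisting conventions $\Eu(lV)/\Eu(lV^{\vee})$ and $\det^{-l}(V)$, but this is bookkeeping rather than a gap in your argument.
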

Here $\L_{{\rm Gr}_{n,N}}^{(V,l)}$ is the range of the big J-function in the level-$l$ permutation-invariant \mbox{genus-0} quantum K-theory of the grassmannian ${\rm Gr}_{n,N}$, where $V$ is its tautological bundle, and the \mbox{level-$l$} twisted structure sheaves are defined as in~\cite{R-Z}: $\O_{g,m,d}\otimes \det^{-l}(\ft_*\ev^*V)$. Note, that the spurious signs $(-1)^{l\dim \E}=(-1)^{l\sum d_i+l\dim V}$ initially introduced in the determinantal twisting disappears from our ultimate formulation. The first part of it can be absorbed by the change $Q\mapsto (-1)^lQ$ of the Novikov variable, which is offset by the signs of $(-P_i)^{ld_i}$ in our computation. The second part, $(-1)^{l\dim V}$, is the discrepancy in Poincar{\'e} pairings (it affects the notion of~dual bases $\{ \phi_{\a}\}$, $\{ \phi^\a \}$ in the definition of J-functions) which correspond to the two twistings of~$\O_{{\rm Gr}_{n,N}}$: by $\Eu(lV)/\Eu(lV^{\vee})=\prod (-P_i)^{-l}$ and $\det^{-l}(V)=\prod P_i^{-l}$.

The theorem is a non-abelian counterpart of the result of Ruan--Zhang for toric manifolds, obtained in~\cite{R-Z} on the basis of adelic characterization. Both can also be derived by fixed point localization.

\begin{Corollary}\label{cor5}
The series $(1-q) I^T_{(V,l)}$, where
\[
I^T_{(V,l)}=\sum_{0\leq d_1,\dots,d_n}\frac{Q^{d_1+\cdots+d_n}\prod_{i=1}^nP_i^{ld_i}q^{l\binom{d_i}{2}}} {\prod_{i=1}^n\prod_{j=1}^N\prod_{m=1}^{d_i}(1-q^mP_i/\Lambda_j)} \prod_{i,j=1}^n\frac{\prod_{m=-\infty}^{d_i-d_j}(1-q^mP_i/P_j)} {\prod_{m=-\infty}^0(1-q^mP_i/P_j)}
\]
represents a point in $\L_{{\rm Gr}_{n,N}}^{(l)}$, and for $-n<l\leq N-n+1$ is the small J-function of the level-$l$ theory.
\end{Corollary}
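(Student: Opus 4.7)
The plan is to derive the first claim directly from Theorem~\ref{thm6} (with the $\Pi\mathfrak{g}/\mathfrak{t}$ J-function of Section~\ref{Sec4} as input), and then verify the dilaton-shift condition that singles out the small J-function within $\L^{(V,l)}_{{\rm Gr}_{n,N}}$. For the first claim, apply Theorem~\ref{thm6} to the $W$-invariant point $(1-q) J^T_{\Pi\mathfrak{g}/\mathfrak{t}}\in\L_{\Pi\mathfrak{g}/\mathfrak{t}}$ from Section~\ref{Sec4}; after the specialization $\Lambda_0=1$ and $Q_1=\cdots=Q_n=Q$ and multiplication of the $(d_1,\dots,d_n)$-summand by the prescribed level factor $\prod_i(P_i^{d_i}q^{\binom{d_i}{2}})^l$ (noting that the $i=j$ factors in the product of the stated formula are trivially $1$), the output is exactly $(1-q) I^T_{(V,l)}$, which is therefore a point in $\L^{(V,l)}_{{\rm Gr}_{n,N}}$.

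For the second claim, the small J-function of the level-$l$ theory is characterized as the unique element of $\L^{(V,l)}_{{\rm Gr}_{n,N}}$ whose $\K_+$-projection equals the dilaton shift $1-q$. Hence it suffices to check that for each $d\geq 1$ the coefficient $f_d(q)$ of $Q^d$ in $I^T_{(V,l)}$ satisfies $(1-q)f_d\in\K_-$, i.e.\ that $f_d$ is regular at $q=0$ and vanishes to order at least $2$ at $q=\infty$.

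The vanishing at infinity is a termwise degree count: rewriting the product factors via Remark~\ref{rmk1}, the $(d_1,\dots,d_n)$-term vanishes at $q=\infty$ with order $N\sum_i\binom{d_i+1}{2} - \sum_{d_i>d_j}\binom{d_i-d_j+1}{2} - l\sum_i\binom{d_i}{2}$, which under $l\leq N-n+1$ (and the grassmannian being of positive dimension, $N\geq n+1$) is everywhere $\geq 2$; the extremum occurs at $d=(D,0,\dots,0)$ with $D=1$ and gives exactly $N-n+1\geq 2$.

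Regularity at $q=0$ is termwise immediate for $l\geq 0$, since Remark~\ref{rmk1} yields a non-negative $q$-exponent $\sum_{d_i>d_j}\binom{d_i-d_j}{2}$ and $q^{l\binom{d_i}{2}}$ is entire. For $-n<l<0$ individual terms carry genuine poles at $q=0$, and the main obstacle is to show that these cancel when summed over partitions $(d_1,\dots,d_n)$ of a given $d$. The approach is to extract the leading polar coefficient from the Remark~\ref{rmk1} leading form $\prod_i P_i^{ld_i}\prod_{d_i>d_j}(-P_i/P_j)^{d_i-d_j}P_j/(P_j-P_i)$ and check order by order that the sum over each $S_n$-orbit of partitions vanishes, exploiting the antisymmetry of the factors $(P_j-P_i)^{-1}$ under transpositions $P_i\leftrightarrow P_j$. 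The sharpness of the bound $l>-n$ is explained by the shape $d=(D,0,\dots,0)$, where the termwise pole order is $(l+n-1)\binom{D}{2}$ and becomes unbounded as $D\to\infty$ precisely at $l\leq -n$.
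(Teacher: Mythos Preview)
Your derivation of the first claim from Theorem~\ref{thm6}, and your treatment of the range $0\leq l\leq N-n+1$ (termwise regularity at $q=0$, and the degree count at $q=\infty$), match the paper's own argument.

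The genuine gap is in the negative-level range $-n<l<0$. Your proposed mechanism --- cancellation of the polar part at $q=0$ ``over each $S_n$-orbit'' via the antisymmetry of $(P_j-P_i)^{-1}$ --- is not correct as stated. Take $n=2$, $l=-1$, total degree $d=4$. The tuple $(d_1,d_2)=(2,2)$ is fixed by $S_2$, the product factor equals $1$ (since $d_1=d_2$), and the level factor is $(P_1P_2)^{-2}q^{-2}$; so this single-element orbit contributes a genuine $q^{-2}$ pole that cannot cancel within its own orbit. The required cancellation actually mixes \emph{different} $S_n$-orbits of the same total degree (here, the $q^{-2}$ pole of $(2,2)$ must be offset by the combined polar contributions of $(3,1)$ and $(1,3)$, which also involve subleading terms of the denominator and of the product factor). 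Tracking this to all polar orders, across all partitions of each $d$, is exactly the ``non-trivial combinatorial'' content you have not supplied.

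The paper avoids this computation entirely: for $-n<l<0$ it invokes the Dong--Wen level-correspondence identity $I^T_{(V,l)}=\tilde I^T_{(\tilde V^{\vee},-l)}$ between dual grassmannians ${\rm Gr}_{n,N}$ and ${\rm Gr}_{N-n,N}$. Since $-l>0$ on the dual side, termwise regularity at $q=0$ is immediate there, and the reducedness at $q=\infty$ follows from the same degree estimate (with the linear term absent for the $\tilde V^{\vee}$ twisting). If you want a self-contained argument, you would effectively need to reprove a piece of the Dong--Wen identity; otherwise, cite it.
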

\begin{proof}
The formula itself is obtained, of course, from the non-abelian representation of the small J-function $(1-q)J^T=(1-q)I^T_{(V,0)}$ by the recipe described in the theorem. For $l\geq 0$, terms of $(1-q)I^T_{(V,l)}$ have no pole at $q=0$, and for $l\leq N-n+1$ can be shown to be reduced rational functions of $q$ (except the $Q^0$-term $1-q$). Indeed, the difference between the $q$-degrees of the denominator and numerator of the coefficient of $I^T_{(V,l)}$ indexed by $(d_1,\dots, d_n)$ is
\[
N\sum_i\binom{d_i+1}{2}-\sum_{d_i>d_j} \binom{d_i-d_j+1}{2}-l\sum_i\binom{d_i+1}{2}+l\sum_i d_i.
\]
When $l\leq N-n+1$, the binomial sum is non-negative, since for each $i$ the number of $j$ with $d_j<d_i$ does not exceed $n-1$. The linear term is $> 1$ unless all $d_i\neq 1$. Note that in this case the whole expression doesn't depend on $l$, and is still $\geq N-n+1>1$. Thus, even after multiplication by $1-q$ the rational function remains reduced.

For $l<0$, the terms of the series are therefore also reduced, but can have a pole at $q=0$. However, even when this happens, the pole disappears after summing the terms with the same degree $d_1+\cdots+d_n$~-- at least when $l>-n$. This follows from a non-trivial combinatorial result of H. Dong and Y. Wen~\cite{Dong-Wen}, according to which $I^T_{(V,l)}=\tilde{I}^T_{(V^{\vee},-l)}$ for \mbox{$-n<l<N-n$}, where $\tilde{I}^T_{(\tilde{V}^{\vee},-l)}$ is the similar series corresponding to the {\em dual} grassmannian ${\rm Gr}_{N,N-n}:=\mathop{\rm Hom} \big(\CC^{N-n},\CC^{N\vee}\big)//{\rm GL}_{N-n}(\CC)$, and the bundle $\tilde{V}^{\vee}$ {\em dual} to the tautological one:
\[
\sum_{0\leq d_1,\dots,d_n}\frac{Q^{d_1+\cdots+d_{N-n}}\prod_{i=1}^{N-n}\tilde{P}_i^{-ld_i}q^{-l\binom{d_i+1}{2}}}{\prod_{i=1}^{N-n}\prod_{j=1}^N\prod_{m=1}^{d_i}(1-q^m\tilde{P}_i/\Lambda_j^{-1})} \prod_{i,j=1}^{N-n}\frac{\prod_{m=-\infty}^{d_i-d_j}(1-q^m\tilde{P}_i/\tilde{P}_j)}{\prod_{m=-\infty}^0(1-q^m\tilde{P}_i/\tilde{P}_j)}.
\]
Here $\tilde{P}_i$ are K-theoretic Chern roots of the tautological $N-n$-dimensional bundle $\tilde{V}$. Note the characters $\Lambda_j^{-1}$ of the torus $T^N$ action on $\CC^{N\vee}$. Also note the binomial coefficient $\binom{d_i+1}{2}$ $\big($instead of $\binom{d_i}{2}\big)$: this is the effect of using $\tilde{V}^{\vee}$ rather than $\tilde{V}$ in the construction of the determinantal twistings. In the previous section we already had the experience of using $E=V^{\vee}$, from which it is easy to infer the origin of the modifying factors:
\[
\prod_{m=1}^{d_i}\frac{1-q^m\tilde{P}_i}{1-q^{-m}\tilde{P}_i} = \big({-}\tilde{P}\big)^{d_i}q^{\binom{d_i+1}{2}}.
\]
The dual grassmannians are canonically identified by $\big(V\subset \CC^N\big) \mapsto \big(V^{\perp}\subset \CC^{N\vee}\big)$, and the result of~\cite{Dong-Wen} identifies the two expressions as $Q$-series with coefficients in $K^0_T({\rm Gr}_{n,N})=K^0_T({\rm Gr}_{N,N-n})$-valued rational functions of $q$ when $-n<l<N-n$. By the previous estimates of the $q$-degrees $\big($where this time the linear term $l\sum d_i$ isn't present$\big)$, $(1-q)\tilde{I}^T_{(\tilde{V}^{\vee},-l)}$ passes the requirements to be a small J-function when $0\leq -l < N-(N-n)+1$, i.e., $0\geq l\geq -n$. Therefore (though this is not apparent) so does $(1-q)I^T_{V,l}$ at least for $0>l>-n$.
\end{proof}

\begin{example*}\label{ex1}For ${\rm Gr}_{1,N}=\CC P^{N-1}$, we have
\[
(1-q) I^T_{(l)}=(1-q) \sum_{d\geq 0}\frac{Q^d P^l q^{l\binom{d}{2}}}{\prod_{j=1}^N\prod_{m=1}^d(1-q^mP/\Lambda_j)}.
\]
Obviously the series is the small J-function only when $-1< l \leq N$, i.e., the boundaries given by the corollary are sharp.
\end{example*}

\begin{Proposition}$\L_{{\rm Gr}_{n,N}}^{(V,l)}=\L_{{\rm Gr}_{N,N-n}}^{(\tilde{V}^{\vee},-l)}$.
\end{Proposition}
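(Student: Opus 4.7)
The plan is to deduce the proposition from the fixed-point characterization of the range $\L$ used in Sections~\ref{Sec2} and~\ref{Sec3}, combined with the Dong--Wen identity invoked in the proof of Corollary~\ref{cor5}. Under the canonical isomorphism ${\rm Gr}_{n,N} \to {\rm Gr}_{N,N-n}$, $V \mapsto V^\perp$, the two grassmannians are $T^N$-equivariantly identified once we reparameterize the torus by $\Lambda_j \leftrightarrow \Lambda_j^{-1}$, and the equivariant K-rings are canonically identified. Consequently the two cones $\L_{{\rm Gr}_{n,N}}^{(V,l)}$ and $\L_{{\rm Gr}_{N,N-n}}^{(\tilde{V}^{\vee},-l)}$ sit inside the same loop space $\K$, and it suffices to verify that the criteria (i) and (ii) impose identical constraints on both sides.

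For criterion (ii), the fixed points $V_I$ (with $|I|=n$) correspond to the fixed points $V_{I^c}^\perp$; their tangent weights $\{\Lambda_j/\Lambda_i : i \in I,\ j \notin I\}$ and $\{\Lambda_i^{-1}/\Lambda_j^{-1} : j \in I^c,\ i \notin I^c\}$ coincide as multisets, and the 1-dimensional $T^N$-orbits, together with their $m_0$-fold covers, are in bijection. Hence the untwisted recursion coefficients $\Eu(T_\alpha X)/\Eu(T_\phi X_{0,2,m_0})$ are identical on the two sides. The level-$l$ (resp.\ level-$(-l)$) determinantal twist contributes a further multiplicative factor at each residue, which by Theorem~\ref{thm6} is precisely the factor that carries $(1-q) J^T$ to $(1-q) I^T_{(V,l)}$ on one side and to $(1-q) \tilde{I}^T_{(\tilde V^\vee,-l)}$ on the other. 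Matching these factors at every fixed point is equivalent, after the combinatorial manipulation already performed in Corollary~\ref{cor5}, to the Dong--Wen identity $I^T_{(V,l)} = \tilde{I}^T_{(\tilde V^\vee,-l)}$.

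Criterion (i) is a purely local condition on meromorphic functions of $q$ near the roots of unity, established in Section~\ref{Sec3} through the orbit of $J_{pt}$ under the pseudo-finite-difference group acting on $\L_{pt}$. It depends only on the ground $\lambda$-algebra and the Adams operations, and in particular is insensitive to the target and its twisting; so it imposes the same constraint on both sides of the claimed equality.

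The main obstacle will be the bookkeeping of combinatorial and sign factors: the discrepancy between $\binom{d_i}{2}$ (which arises from twisting by $V$) and $\binom{d_j+1}{2}$ (from twisting by $\tilde V^\vee$), the overall sign absorbed by the substitution $Q \mapsto (-1)^l Q$ noted after Theorem~\ref{thm6}, and the corresponding adjustment of the Poincar{\'e} pairings used to define dual bases in the two J-functions. These, however, are exactly the steps already carried out in the proof of Corollary~\ref{cor5}; once performed, they simultaneously match the localization data of the two twisted theories at every fixed point, and the equality of cones follows.
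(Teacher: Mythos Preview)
Your approach takes a long detour through fixed-point localization and the Dong--Wen identity, but this route has a genuine gap and misses the direct argument the paper uses.

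The gap is twofold. First, the Dong--Wen identity $I^T_{(V,l)}=\tilde I^T_{(\tilde V^\vee,-l)}$ is stated (and proved in~\cite{Dong-Wen}) only for $-n<l<N-n$, whereas the Proposition asserts equality of cones for \emph{all} integers~$l$. You give no indication of how to extend beyond that range. Second, even within the range, equality of two particular I-functions does not by itself imply equality of the full cones: you would need to argue that the residues of this specific series at the localization poles are nonvanishing, so that the recursion coefficients can be read off uniquely from it. You assert that ``matching these factors \dots\ is equivalent \dots\ to the Dong--Wen identity'', but this equivalence is neither obvious nor established in Corollary~\ref{cor5} (which uses Dong--Wen for a different purpose: to control the pole at $q=0$).

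The paper's proof is a three-line global argument that avoids all of this. From the short exact sequence $0\to V\to\CC^N\to\tilde V^\vee\to 0$ one gets
\[
\det(\ft_*\ev^*V)\otimes\det(\ft_*\ev^*\tilde V^\vee)=\det(\ft_*\ev^*\CC^N),
\]
and on moduli of genus-$0$ curves $\ft_*\ev^*\CC^N=\CC^N$ (the trivial bundle has no $H^1$), so the right-hand side is the constant $\det\CC^N=\prod_j\Lambda_j$. Hence twisting by $\det^{-l}(\ft_*\ev^*V)$ and by $\det^{l}(\ft_*\ev^*\tilde V^\vee)$ differ only by the constant $(\det\CC^N)^{-l}$, which is exactly the discrepancy between the twisted Poincar\'e pairings (since $\det V\otimes\det\tilde V^\vee=\det\CC^N$). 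This establishes the equality of the two twisted theories, and hence of their cones, for every~$l$ at once, with no combinatorics and no appeal to~\cite{Dong-Wen}.

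If you wanted to salvage your localization approach, the honest route would be to compute and compare directly the determinantal factors $\det^{\mp l}(\ft_*\phi^*V)$ and $\det^{\pm l}(\ft_*\phi^*\tilde V^\vee)$ entering the recursion coefficients on each $m_0$-fold cover $\phi$. But doing so amounts to the same computation as above, specialized to each orbit: $\phi^*V\oplus\phi^*\tilde V^\vee=\CC^N$ on $\CC P^1$, hence the two determinant factors multiply to $(\det\CC^N)^{m_0}$ times something absorbed by the pairing. In other words, the fixed-point argument would just be the paper's global argument done pointwise, not an independent proof via Dong--Wen.
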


\begin{proof}
From $\tilde{V}^{\vee}=\CC^N/V$ we find
\[
\det (\ft_*\ev^*V) \otimes \det \big(\ft_*\ev^*\tilde{V}^{\vee}\big) = \det \big(\ft_*\ev^*\CC^N\big)
\]
which over moduli spaces of {\em rational} curves equals $\det \CC^N =\prod_{j=1}^N\Lambda_j=\det V \otimes \det \tilde{V}^{\vee}$,
the factor absorbed by the discrepancy in Poincar\'e pairings between the two theories.
\end{proof}

\section{Mirrors}
\label{Sec8}

Consider the improper {\em Jackson integral} (or {\em $q$-integral}), defined as
\[
\int_0^{\infty} f(X)\ X^{-1}{\rm d}_qX :=\sum_{d\in \ZZ} f\big(q^{-d}\big),
\]
in the example
\[
f(X)=X^{\ln \Lambda/\ln q} \prod_{m=1}^{\infty}(1-X/q^m).
\]
For $|q|>1$, the infinite product converges to an entire function of $X$
which coincides with the $q$-exponential function
\[ e_q^{X/(1-q)}=\sum_{d\geq 0}\frac{X^d}{(1-q)\big(1-q^2\big)\cdots\big(1-q^d\big)}.\]
Since the integrand vanishes at $X=q^{-d}$ with $d<0$, the $q$-integral can be computed as
\[
\sum_{d\geq 0}\Lambda^{-d}\prod_{m=d+1}^{\infty}\big(1-q^{-m}\big)=\sum_{d\geq 0}\frac{\Lambda^{-d}\prod_{m=1}^{\infty}\big(1-q^{-m}\big)}{\big(1-q^{-1}\big)\big(1-q^{-2}\big)\cdots \big(1-q^{-d}\big)} =
\frac{\prod_{m=1}^{\infty}\big(1-q^{-m}\big)}{\prod_{m=0}^{\infty}\big(1-q^{-m}/\Lambda\big)}.
\]
The last equality holds for $|\Lambda|>1$, but analytically
extends the value of the $q$-integral to all $\Lambda\neq q^{-m}$, $m=0,1,2,\dots$.
The ratio is closely related to the $q$-gamma function (see~\cite{DK} for a modern treatment of it, including the above $q$-integral representation). In particular, the application of the translation operator $q^{\Lambda\p_{\Lambda}}\colon \Lambda \to q\Lambda$ results in the multiplication of the whole expression by $1-\Lambda^{-1}$. This also follows from the properties of the integrand:
\[
\big(1-q^{X\p_X}/\Lambda\big) f(X) = X f(X) = q^{\Lambda\p_{\Lambda}} f(X),
\]
since the $q$-integral is obviously preserved by the translation $q^{X\p_X}$ of the integrand. The latter property of $q$-integrals will be more useful to us than the previous explicit calculation of their values in terms of $q$-gamma functions.

More generally, one can define improper $q$-integrals using shifted multiplicative
$q$-lattices $\big\{ q^d/A\, |\, d\in\ZZ\big\}$:
\[
\int_0^{\infty/A} g(Y)\ Y^{-1}{\rm d}_qY:=\int_0^{\infty}g(Y/A) Y^{-1}{\rm d}_q Y=\sum_{d\in\ZZ}g\big(q^{-d}/A\big).
\]
We will need such $q$-integrals (still assuming $|q|>1$) for
\[
g(Y)=\frac{Y^{\ln \Lambda/\ln q}}{\prod_{m=0}^{\infty}(1-Y/q^m)}.
\]
The integrand satisfies
\[
\big(1-\Lambda q^{-Y\p_Y}\big) g(Y) = Y g(Y) = q^{\Lambda\p_{\Lambda}} g(Y),
\]
implying that the $q$-integral, if defined, is multiplied by $(1-\Lambda)$ when $\Lambda$
is replaced by $q\Lambda$. In~fact at $A=1$ the $q$-integral is not defined (because of the factor $1-Y$ in the denominator). However, as it is shown in~\cite{DK}
(see formulas~(1.12),~(1.13), and~(1.15) keeping in mind that $q$ there corresponds to
our~$q^{-1}$), it is well-defined for $A\neq q^d$.
The value of this $q$-integral does depend on~$A$, but the properties remain the same.
For the sake of certainty we may use $A=-1$, and indicate this by the notation
$\int_0^{-\infty} g(Y) Y^{-1}{\rm d}_qY$.

Our goal will be to represent the small J-function $J_X^T$ of the grassmannian $X=G_{n,N}$ by suitable Jackson-like integrals in a fashion similar to representing cohomological J-functions by complex oscillating integrals in the mirror theory of, say, toric or flag manifolds. To maintain visual resemblance with complex oscillating integrals, we will denote $X^{-1}{\rm d}_q X$ as ${\rm d}\ln_q X$, and often replace the infinite products in the integrands
with their asymptotical expressions:
\begin{gather*}
\prod_{m=1}^{\infty}(1-X/q^m) \sim {\rm e}^{ \sum_{k>0}X^k/k(1-q^k)},
\qquad
\frac{1}{\prod_{m=0}^{\infty}(1-Y/q^m)} \sim {\rm e}^{ -\sum_{k>0}q^kY^k/k(1-q^k)}.
\end{gather*}

Let us recall from Section~\ref{Sec4} that $J^T_X$ is obtained from $J^T_{\Pi\mathfrak{g}/\mathfrak{t}}$ by passing to the limit $\Lambda_0=1$, $Q_1=\cdots=Q_n=Q$, and takes values in $K^0_T(X)$ consisting of {\em symmetric} functions of $P_1,\dots,P_n$. Before the limit, $J^T_{\Pi\mathfrak{g}/\mathfrak{t}}$ is the J-function of a toric superspace. We begin with setting up the toric mirror (cf.~\cite{Givental:perm6}) to this toric superspace, and studying its properties.

In the complex torus $\X$ with multiplicative coordinates $X_{ij}$, $i=1,\dots,n$, $j=1,\dots,N$, $Y_{ii'}$, $i,i'=1,\dots,n$, $i\neq i'$, consider the n-parametric family of tori
\[
\X_{Q_1,\dots,Q_n}:=\bigg\{ (X,Y)\in \X \bigm| \prod_jX_{ij}=Q_i
\prod_{i'\neq i}(Y_{ii'}/Y_{i'i}),\, i=1,\dots, n \bigg\},
\]
and introduce the $q$-integral
\begin{gather*}
\I:= \int_{\Gamma\subset \X_{Q_1,\dots,Q_n}} {\rm e}^{ \sum_{k>0}\left(\sum_{i,j}X_{ij}^k -q^k\sum_{i\neq i'}Y_{ii'}^{k}\right)/k(1-q^k)}
\\ \hphantom{\I:= \int_{\Gamma\subset \X_{Q_1,\dots}}}
{}\times \prod_{i\neq i'} Y_{ii'} \frac{\bigwedge_i \big(\bigwedge_j {\rm d}_q\ln X_{ij} \bigwedge_{i'\neq i}{\rm d}_q\ln Y_{ii'}\big)}{\bigwedge_i
\big(\sum_j{\rm d}_q\ln X_{ij} -\sum_{i'\neq i}{\rm d}_q\ln (Y_{ii'}/Y_{i'i})\big)}.
\end{gather*}
To clarify the wedge-product expression: if the subscript in all $d_q$ is removed, the expression becomes the standard translation-invariant holomorphic volume on the complex torus $\X_{Q_1,\dots,Q_n}$ (and coincides with the one found in Introduction).

By the ``cycle'' $\Gamma$ we understand a ``multiplicative'' $q$-lattice in $\X_{Q_1,\dots,Q_n}$, i.e., a $\ln q$-lattice on the universal covering of the torus of rank $nN+n^2-2n$, suitable for multi-dimensional $q$-integration, or a formal linear combination of such $q$-lattices; we'll meet some examples later. This should be considered as the K-theoretic mirror to what we denoted in Section~\ref{Sec4} by $\Pi \mathfrak{g}/\mathfrak{t}$: the toric super-bundle over $\big(\CC P^{N-1}\big)^n=\CC^{Nn}//T^n$ with the fiber $\mathfrak{g}/\mathfrak{t}$ associated with the adjoint action of the maximal torus $T^n$ in ${\rm GL}_N(\CC)$ on $\Lie {\rm GL}_N(\CC)/\Lie T^n$. Namely, in the torus-non-equivariant limit $J_{\Pi\mathfrak{g}/\mathfrak{t}}$, the ``small J-function'' $J^T_{\Pi \mathfrak{g}/\mathfrak{t}}$ introduced in Section~\ref{Sec4} satisfies (as it is not hard to check) the system of finite difference equations
\begin{gather*}
\prod_{i'\neq i}\big(1-q P_{i'}P_i^{-1}q^{Q_{i'}\p_{Q_{i'}}-Q_i\p_{Q_i}}\big)
\big(1-P_iq^{Q_i\p_{Q_i}}\big)^N J_{\Pi \mathfrak{g}/\mathfrak{t}}
\\ \qquad
{} = Q_i \prod_{i'\neq i} \big(1-q P_iP_{i'}^{-1} q^{Q_i\p_{Q_i}-Q_{i'}\p_{Q_{i'}}}\big) J_{\Pi \mathfrak{g}/\mathfrak{t}},\qquad i=1,\dots, n.
\end{gather*}
So, the claim is that our mirror $q$-integral satisfies the same system (for scalar-valued rather than $K^0(X)$-valued functions):
\begin{gather*}
 \prod_{i'\neq i}\big(1-q\ q^{Q_{i'}\p_{Q_{i'}}-Q_i\p_{Q_i}}\big) \big(1-q^{Q_i\p_{Q_i}}\big)^N \I
 = Q_i\prod_{i'\neq i} \big(1-q\ q^{Q_i\p_{Q_i}-Q_{i'}\p_{Q_{i'}}}\big)\I,\quad\ i=1,\dots, n.
\end{gather*}
To check this, we note that translation operators $q^{X_{ij}\p_{X_{ij}}}$ and $q^{-Y_{ii'}\p_{Y_{ii'}}}$ project to the $Q$-space into respectively $q^{Q_i\p_{Q_i}}$ and $q^{Q_i\p_{Q_i}\!-Q_{i'}\p_{Q_{i'}}}$\!. Applying $q^{X_{ij}\p_{X_{ij}}}$ to the factor $\Delta_{ij}\!:=\!{\rm e}^{\sum_{k>0}\!X_{ij}^k/k(1-q^k)}$ in the integrand of $\I$ containing $X_{ij}$, we obtain
\[
{\rm e}^{\sum_{k>0} q^kX_{ij}^k/k(1-q^k)}={\rm e}^{\sum_{k>0}X_{ij}^k/k(1-q^k)}{\rm e}^{-\sum_{k>0}X_{ij}^k/k}=(1-X_{ij}) \Delta_{ij}.
\]
Therefore, applying $\prod_j(1-q^{X_{ij}\p_{X_{ij}}})$, we find the integrand multiplied by $\prod_j X_{ij}$. Similarly, applying $q^{-Y_{ii'}\p_{Y_{ii'}}}$ to $\nabla_{ii'}:={\rm e}^{-\sum_{k>0}q^kY_{ii'}^{k}/k(1-q^k)}Y_{ii'}$,
we obtain $(1-Y_{ii'})q^{-1}\nabla_{ii'}$, and hence applying $1-q\ q^{-Y_{ii'}\p_{Y_{ii'}}}$ we find the integrand multiplied by $(1-q (1-Y_{ii'}) q^{-1})=Y_{ii'}$. Since $\prod_{i'\neq i}Y_{i'i}\prod_jX_{ij} =Q_i\prod_{i'\neq i} Y_{ii'}$ for $i=1,\dots, n$, the promised finite difference equations follow.

In this argument it was assumed that the family of $q$-integration
lattices $\Gamma\subset \X_{Q_1,\dots,Q_n}$ depending on $Q_1,\dots,Q_n$ was invariant
under all coordinate multiplicative $q$-translations in the ambient torus $\X$.
Also note that the same argument applies to the ordinary (as opposed to~Jackson's)
integrals, provided that (the families of) the cycles of integration are homologous to~their $q$-translates. The catch is that it is not entirely clear how to produce
such lattices and/or cycles. Below we will resolve this catch in the $T$-equivariant case.

The torus-equivariant counterpart $\I^T$ of $\I$ is obtained by inserting into the integrand the factor
\[
\prod_{i,j}X_{ij}^{\ln \Lambda_j/\ln q} \prod_{i\neq i'} Y_{ii'}^{\ln \Lambda_0/\ln q}.
\]
By repeating the above computations, we find that $\I^T$ satisfies finite difference equations
\begin{gather*}
\prod_{i'\neq i}\big(1-q \Lambda_0 q^{Q_{i'}\p_{Q_{i'}}-Q_i\p_{Q_i}}\big) \prod_j\big(1-q^{Q_i\p_{Q_i}}/\Lambda_j\big) \I^T
 = Q_i\prod_{i'\neq i} \big(1-q\Lambda_0 q^{Q_i\p_{Q_i}-Q_{i'}\p_{Q_{i'}}}\big) \I^T,
 \\
 \qquad i=1,\dots, n.
\end{gather*}
Replacing the natural action of finite difference operators on scalar-valued functions with the representation on $K^0_T(X)$-valued functions by $q^{Q_i\p_{Q_i}}\mapsto P_iq^{Q_i\p_{Q_i}}$, we obtain the equations satisfied by the series $J^T_{\Pi\mathfrak{g}/\mathfrak{t}}$ from Section~\ref{Sec4}.

Let us now examine $\I^T$ for a cycle $\Gamma$ fitting coordinate charts on $\X_{Q_1,\dots,Q_n}$. Picking an~injec\-tive function $J\colon \{ 1,\dots, n\} \to \{1,\dots, N\}$, we express $X_{iJ(i)}$, $i=1,\dots,n$, in terms of $Q_i$ and the remaining variables, using the equations of $\X_{Q_1,\dots,Q_n}$, and then rewrite the integral $\I^T$ in this chart. For instance, taking $J(i)=i$
we find $X_{ii}=Q_i\prod_{j\neq i}X_{ij}^{-1}\prod_{i'\neq i}(Y_{ii'}/Y_{i'i})$. Consequently
\[
\I^T_{(1,\dots,n)}=\sum_{0\leq d_1,\dots,d_n}\prod_i \frac{Q_i^{d_i+\ln \Lambda_i/\ln q}}{(1-q)(1-q^2)\cdots (1-q^{d_i})}\ \I^{(d_1,\dots,d_n)}_{(1,\dots,n)},
\]
where
\begin{gather*}
\I^{(d_1,\dots,d_n)}_{(1,\dots,n)}=\pm \int_{\Gamma_{(1,\dots,n)}} {\rm e}^{ \sum_{k>0} \left(\sum_{j\neq i}X_{ij}^k-q^k\sum_{i\neq i'}Y_{ii'}^k\right)/k(1-q^k)}
\\ \hphantom{\I^{(d_1,\dots,d_n)}_{(1,\dots,n)}=}
{}\times\prod_{i\neq j} X_{ij}^{-d_i+\ln (\Lambda_j/\Lambda_i)/\ln q} \prod_{i\neq i'} Y_{ii'}^{d_i-d_i'+1+\ln (\Lambda_0\Lambda_i/\Lambda_{i'})/\ln q} \!\bigwedge_{i\neq j} {\rm d}_q\ln X_{ij} \!\bigwedge_{i\neq i'} {\rm d}_q\ln Y_{ii'},
\end{gather*}
and the sign is determined by the order of the $q$-differentials and an orientation of the multiplicative $q$-lattice $\Gamma_{(1,\dots,n)}$. This is the product
of model $1$-dimensional $q$-integrals
\begin{gather*}
I^{(d)}_{+}:=\int_0^{\infty} {\rm e}^{ \sum_{k>0}X^k/k(1-q^k)} X^{-d+\ln (\Lambda'/\Lambda)/\ln q} {\rm d}_q\ln X,
\\
I^{(d)}_{-}:=\int_0^{-\infty} {\rm e}^{ -\sum_{k>0}q^kY^k/k(1-q^k)} Y^{1-d+\ln (\Lambda_0\Lambda'/\Lambda)/\ln q} {\rm d}_q\ln X,
\end{gather*}
considered at the beginning of this section. They can be expressed via $I_{\pm}^{(0)}$ by the recursive property of the $q$-gamma-like function. Explicitly, applying to the integrand of $I^{(d)}_{+}$ the operator $1-q^d q^{ X\p_X}\Lambda/\Lambda'$, we find (after a short computation) that $\big(1-q^d\Lambda/\Lambda'\big) I_{+}^{(d)} = I^{(d-1)}_{+}$. Applying this inductively we conclude that $I_{+}^{(d)} = I_{+}^{(0)}/\prod_{m=1}^d (1-q^m\Lambda/\Lambda')$. Using this and a similar recursion for $I^{(d)}_{-}$, we can reduce $\I^{(d_1,\dots,d_n)}_{(1,\dots,n)}$ to $\I^{(0,\dots,0)}_{(1,\dots,n)}$:
\[
\I^{(d_1,\dots,d_n)}_{(1,\dots,n)}=\frac{\I^{(0,\dots,0)}_{(1,\dots,n)}}
{ \prod_{i\neq j}\prod_{m=1}^{d_i}(1-q^m\Lambda_i/\Lambda_j)} \prod_{i\neq i'} \frac{\prod_{m=-\infty}^{d_i-d_i'}(1-q^m\Lambda_0\Lambda_i/\Lambda_{i'})}
{\prod_{m=-\infty}^0(1-q^m\Lambda_0\Lambda_i/\Lambda_{i'})}.
\]
Comparing this to the terms of the series $J^T_{\Pi\mathfrak{g}/\mathfrak{t}}$ from Section~\ref{Sec4} localized at the fixed point $(P_1,\dots,P_n)=(\Lambda_1,\dots,\Lambda_n)$, we arrive at
\[
\I^T_{(1,\dots,n)}=\big(J^T_{\Pi\mathfrak{g}/\mathfrak{t}}\big)_{(1,\dots,n)} \prod_i Q_i^{\ln \Lambda_i/\ln q} \I^{(0,\dots,0)}_{(1,\dots,n)}.
\]
Note that $\I^{(0,\dots,0)}_{(1,\dots,n)}$ doesn't depend on $Q$, while the role of the factor $Q_i^{\ln \Lambda_i/\ln q}$ is to conjugate~$q^{Q_i\p_{Q_i}}$ into $P_i q^{Q_i\p_{Q_i}}|_{P_i=\Lambda_i}$. Thus, one can say that $(\J^T_{\Pi\mathfrak{g}/\mathfrak{t}})_{(1,\dots,n)}$ is given by the $q$-integral
over the ``cycle'' which is the formal multiple of the $q$-lattice $\Gamma_{(1,\dots,n)}$ (lifted from our chart to the torus $\X_{Q_1,\dots,Q_n}$) with the coefficient inverse to
$\prod_i Q_i^{\ln \Lambda_i/\ln q} \I^{(0,\dots,0)}_{(1,\dots,n)}$.

When the indexing function $J\colon\{1,\dots,n\}\to \{1,\dots,N\}$ is changed to
a permutation $\sigma$ of~$\{ 1,\dots, n\}$, the value of the $q$-integral does not change
$\I^T_{\sigma(1),\dots,\sigma(n)}=\I^T_{1,\dots,n}$, assuming that the order the orientation
of the $q$-lattice $\Gamma_{(\sigma(1),\dots,\sigma(n))}$ is consistent with the order of the
$q$-differentials in the wedge-product.

Thus, the above computation extended to arbitrary injective indexing functions $J$ shows that all relevant components of the vector-function $J^T_{\Pi \mathfrak{g}/\mathfrak{t}}$ can be represented by our $q$-integrals using ``cycles'' fitting appropriate charts.
Setting in the $q$-integral $Q_1=\cdots=Q_n=Q$ and $\Lambda_0=1$, we obtain a torus-equivariant K-theoretic mirror of the grassmannian.

\begin{Theorem}\label{thm7}
The multi-dimensional $q$-integral
\begin{align*}
\I_X^T :=\int_{\Gamma\subset \X_{Q}} &{\rm e}^{ \sum_{k>0}\left(\sum_{i,j}X_{ij}^k - q^k \sum_{i\neq i'}Y_{ii'}^k\right)/k(1-q^k)} \\
&\times \prod_{i,j}X_{ij}^{\ln \Lambda_j/\ln q} \prod_{i\neq i'} Y_{ii'} \frac{\bigwedge_i \big(\bigwedge_j {\rm d}_q\ln X_{ij} \bigwedge_{i'\neq i}{\rm d}_q \ln Y_{ii'}\big)}{\bigwedge_i \big( \sum_j{\rm d}_q\ln X_{ij} -\sum_{i'\neq i}{\rm d}_q\ln (Y_{ii'}/Y_{i'i})\big)},
\end{align*}
with suitable choices of (linear combinations of) $q$-lattices $\Gamma$ in
\[
\X_{Q}:=\bigg\{ (X,Y)\in \X \bigm| \prod_jX_{ij}=Q \prod_{i'\neq i}(Y_{ii'}/Y_{i'i}),\
i=1,\dots, n \bigg\}
\]
represents components of the $K^0_T(X)$-valued small J-function $J^T_X$ of the grassmannian \mbox{$X\!=\!{\rm Gr}_{n,N}$}.
\end{Theorem}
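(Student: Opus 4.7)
The plan is to derive Theorem~\ref{thm7} as a specialization of the $q$-integral representation just established for the toric super-space $\Pi\mathfrak{g}/\mathfrak{t}$, combined with Theorem~\ref{thm3}, which identifies $J^T_X$ with the $W$-invariant limit $Q_1=\cdots=Q_n=Q$, $\Lambda_0=1$ of $J^T_{\Pi\mathfrak{g}/\mathfrak{t}}$. The preceding paragraphs have already done most of the work: (i) the integral $\I^T$ satisfies the same system of $q$-difference equations as $J^T_{\Pi\mathfrak{g}/\mathfrak{t}}$, and (ii) for each injection $J\colon\{1,\dots,n\}\hookrightarrow\{1,\dots,N\}$, a $q$-lattice $\Gamma_J$ compatible with the chart eliminating $X_{i,J(i)}$ via the defining equations of $\X_{Q_1,\dots,Q_n}$ computes $(J^T_{\Pi\mathfrak{g}/\mathfrak{t}})_J$ up to the $Q$-independent factor $\prod_i Q_i^{\ln\Lambda_{J(i)}/\ln q}\cdot \I^{(0,\dots,0)}_J$.

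My first step is to rescale each $\Gamma_J$ by the inverse of this $Q$-independent normalization, treating the result as a formal multiple of an honest $q$-lattice; this is legitimate since cycles enter $q$-integrals linearly. After rescaling, the $q$-integral over $\Gamma_J$ reproduces the fixed-point localization $(J^T_{\Pi\mathfrak{g}/\mathfrak{t}})_J$ exactly. Since permuting $J$ by $\sigma\in S_n$ leaves the $q$-integral invariant (with a consistent orientation convention on the lattice, as noted just above the theorem statement), these cycles are naturally indexed by $n$-element subsets $\{J(1),\dots,J(n)\}\subset\{1,\dots,N\}$ --- precisely the $T^N$-fixed points of the grassmannian.

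Next I would apply the specialization $Q_i=Q$, $\Lambda_0=1$. By Theorem~\ref{thm3}, the $W$-invariant sector of $\L_{\Pi\mathfrak{g}/\mathfrak{t}}$ descends under this specialization into $\L_X$, and in particular each rescaled $\Gamma_J$ produces an $\I^T_X$ that agrees with the corresponding fixed-point localization of $J^T_X$. To recover a chosen component of the vector-valued $J^T_X$, one then takes the linear combination $\Gamma=\sum_{\{i_1<\cdots<i_n\}}c_{\{i_1,\dots,i_n\}}\,\Gamma_{(i_1,\dots,i_n)}$ over $n$-subsets with coefficients dictated by the Lagrange interpolation expressing the component as a combination of fixed-point values in $K^0_T(X)$. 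Weyl invariance is automatic because permutations within a single orbit give equal integrals. The fact that $\I^T_X$ solves the same $q$-difference system as $J^T_X$, combined with the matching of fixed-point data, then forces the two to agree, since these data uniquely determine a solution.

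The principal obstacle is giving precise meaning to the normalizing factor $\prod_i Q_i^{\ln\Lambda_{J(i)}/\ln q}$ and its specialization to $Q^{\sum_i\ln\Lambda_{J(i)}/\ln q}$. This formal power of $Q$ must be absorbed into the cycle $\Gamma$ in a manner that intertwines $q^{Q\p_Q}$ with its representation $\prod_i\Lambda_{J(i)}\cdot q^{Q\p_Q}$ on the $J$-th fixed-point component, so that the $q$-difference equations verified for $\I^T$ translate correctly into the residue recursion relation (ii) characterizing values of $\J^T$ on the grassmannian. A secondary bookkeeping issue is selecting orientations on the multiplicative $q$-lattices $\Gamma_J$ compatibly across charts so that the signs in the Weyl symmetrization work out; once these formal matters are set up, the theorem reduces to the already-verified identity between the chart-by-chart $q$-integral and the fixed-point localizations of $J^T_{\Pi\mathfrak{g}/\mathfrak{t}}$.
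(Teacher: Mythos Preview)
Your proposal is correct and takes essentially the same approach as the paper: the argument is precisely the text preceding the theorem, namely the chart-by-chart computation yielding $\I^T_J=(J^T_{\Pi\mathfrak{g}/\mathfrak{t}})_J\cdot\prod_i Q_i^{\ln\Lambda_{J(i)}/\ln q}\,\I^{(0,\dots,0)}_J$, the formal rescaling of the cycle by the inverse of this $Q$-independent factor, and the specialization $Q_i=Q$, $\Lambda_0=1$ (which uses the identity $J^T=J^T_{\Pi\mathfrak{g}/\mathfrak{t}}|_{Q_i=Q,\,\Lambda_0=1}$ stated at the start of Section~\ref{Sec4} rather than Theorem~\ref{thm3} itself). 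Your added remarks on Lagrange interpolation and on uniqueness via the $q$-difference system are reasonable elaborations but are not invoked by the paper, which simply records the theorem as a summary of the preceding direct computation.
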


\subsection*{Acknowledgments}

This material is based upon work supported by the National Science Foundation under Grant DMS-1906326.
We are thankful to P.~Koroteev and A.~Smirnov for their effort in educating us about their work on quantum K-theory of symplectic quiver varieties, and to H.~Liu and Y.~Wen for sharing and discussing their preprints.

\pdfbookmark[1]{References}{ref}
\LastPageEnding

\end{document}